\numberwithin{equation}{section}
\numberwithin{figure}{section}
\theoremstyle{plain}
\newtheorem{thm}{\protect\theoremname}[section]
  \theoremstyle{remark}
  \newtheorem{rem}[thm]{\protect\remarkname}
  \theoremstyle{definition}
  \newtheorem{example}[thm]{\protect\examplename}
  \theoremstyle{plain}
  \newtheorem{lem}[thm]{\protect\lemmaname}
  \theoremstyle{plain}
  \newtheorem{cor}[thm]{\protect\corollaryname}
\renewcommand{\section}{%
\@startsection{section}{1}%
  \z@{.7\linespacing\@plus\linespacing}{.5\linespacing}%
  {\normalfont\scshape\centering\bfseries}}
\renewcommand{\subsection}{%
\@startsection{subsection}{2}%
  \z@{.5\linespacing\@plus.7\linespacing}{.5\linespacing}%
  {\normalfont\bfseries}}
  \providecommand{\corollaryname}{Corollary}
  \providecommand{\examplename}{Example}
  \providecommand{\lemmaname}{Lemma}
  \providecommand{\remarkname}{Remark}
\providecommand{\theoremname}{Theorem}
\begin{document}
\subjclass[2010]{ 11K55, 28A78, 28A80, 51F99. }

\title[Hausdorff Measure of Cantor Sets]{Exact Hausdorff Measure of Certain Non-Self-Similar Cantor Sets}\footnote{The final version of this paper will appear in Fractals.}

\author{Steen Pedersen and Jason D. Phillips}

\address{Department of Mathematics, Wright State University, Dayton OH 45435.}

\email{steen@math.wright.edu}

\email{phillips.50@wright.edu}

\keywords{Cantor set, fractal set, self-similar set, Hausdorff measure, density. }
\begin{abstract}
We establish a formula yielding the Hausdorff measure for a class
of non-self-similar Cantor sets in terms of the canonical covers of
the Cantor set. 
\end{abstract}
\maketitle
\tableofcontents{}

\section{Introduction}

In this paper we study the Hausdorff measure of certain non-self-similar
Cantor sets. The simplest form of our results is Theorem \ref{thm:Main}.
This theorem contains a formula for the Hausdorff measure of certain
Cantor sets. There are related results in the literature, for different
classes of set, e.g, self-similar Cantor sets Marion \cite{Ma86,Ma87},
Ayer and Strichartz \cite{AySt99}, homogeneous Cantor sets Qu, Rao,
and Su \cite{QRS01}, and intersections of integral self-affine sets
Bondarenko and Kravchenko \cite{BoKr11}.  The class of Cantor sets
we consider overlaps each of these three papers, but it contains Cantor
sets that are not covered by any of these papers, in fact, Example
\ref{Sec-3:Example-3.1} contains an explicit construction of such
Cantor sets.

Estimates of the Hausdorff measure of various classes of linear Cantor
sets can, for example, be found in Cabrelli, Mendivil, Molter, and
Shonkwiler \cite{CMMS04}, Feng, Rao, and Wu \cite{FRJ96}, Garcia,
Molter, and Scotto \cite{GMS07}, Hare, Mendivil, and Zuberman \cite{HMZ13},
Marion \cite{Ma87}, and Pedersen and Phillips \cite{PePh13}. 

While Cantor sets may appear to be special, they occur in mathematical
models involving fractals (e.g., iterated function systems and self-similar
measures); they play a role in number theory (e.g., in $b$-ary number
representations, where $b$ is a base for a number system); in signal
processing and in ergodic theory (e.g., in development of codes as
beta-expansions); and in limit-theorems from probability (e.g., explicit
properties of infinite Bernoulli convolutions.) Cantor sets are also
rooted in the theory of dynamical systems. See, for instance, Palis
and Takens \cite{PaTa87}. For early research in this area see Davis
and Hu \cite{DaHu95} and the list of references therein. In addition
to the papers cited above, we list a small sample, of the numerous
papers dealing with the subject, neighboring areas and applications:
Furstenberg, \cite{MR0354562}, Williams \cite{MR1112813}, Kraft
\cite{MR1106988}, \cite{MR1653359}, Peres and Solomyak \cite{MR1491873},
Duan, Liu, and Tang \cite{MR2534911}, Moreira \cite{MR2810854}. 

We refer to \cite{Fal85} for background information on Hausdorff
measures and Cantor sets.

\section{Statement of Results}

\subsection{\label{sub:Interval-Construction}Interval Construction}

The Cantor sets we consider are determined by a refinement process.
This is a generalization of the familiar interval construction of
the middle thirds Cantor set. The refinements we consider are determined
by parameters $m\geq2,$ $0<\beta<\tfrac{1}{2},$ and $g_{j},$ $j=0,1,\ldots,m,$
such that $g_{0}\geq0,$ $g_{j}>0$ for $j=1,\ldots,m-1,$ $g_{m}\geq0,$
and 
\[
m\beta+\sum_{j=0}^{m}g_{j}=1.
\]
For a closed interval $I,$ the refinement of $I$ determined by $m,$
$\beta,$ and $g_{j}$ is the collection of $m$ closed intervals
$I_{j},$ $j=1,\ldots,m,$ where $I_{j}$ is to the left of $I_{j+1},$
the intervals $I_{j}$ all have length $\left|I_{j}\right|=\beta\left|I\right|,$
the gap between the left hand endpoint of $I$ and $I_{1}$ has length
$g_{0}\left|I\right|,$ the gap between $I_{j}$ and $I_{j+1}$ has
length $g_{j}\left|I\right|,$ and the gap between $I_{m}$ and the
right hand endpoint of $I$ has length $g_{m}\left|I\right|.$ Writing
$I=\left[x,y\right]$ and $I_{j}=\left[x_{j},y_{j}\right]$ we can
state these conditions as $y_{j}\leq x_{j+1},$ $y_{j}-x_{j}=\beta\left(y-x\right),$
$x_{1}-x=g_{0}\left(y-x\right),$ $x_{j+1}-y_{j}=g_{j}\left(y-x\right),$
and $y-y_{m}=g_{m}\left(y-x\right).$ In particular, we do not assume
that the gaps between the retained intervals have the same lengths
or that the endpoints of $I$ are in $\bigcup I_{j}.$ 

The class of Cantor sets $C$ we consider is obtained by selecting
a sequence $\beta_{k},$ $m_{k},$ $g_{k,j},$ $j=0,\ldots,m_{k},$
$k=1,2,\ldots$ of refinement parameters. The Cantor set $C$ is then
constructed by induction. Let $C_{0}:=\left[0,1\right]$ and construct
$C_{k}$ from $C_{k-1}$ by applying the same refinement process determined
by $\beta_{k},$ $m_{k},$ $g_{k,j}$ to each interval in $C_{k-1}.$
The Cantor set $C$ is the intersection $C:=\bigcap C_{k}.$ 

Imposing appropriate conditions on the parameters $\beta_{k},$ $m_{k},$
$g_{k,j},$ $j=0,\ldots,m_{k},$ $k=1,2,\ldots$ allows us to calculate
the Hausdorff measure of $C.$ 
\begin{rem}
The middle thirds Cantor set is obtained by setting $m_{k}=2,$ $\beta_{k}=\tfrac{1}{3},$
$g_{k,0}=g_{k,2}=0,$ and $g_{k,1}=\tfrac{1}{3}$ for all $k=1,2,\ldots.$ 
\end{rem}

\subsection{Function System Construction}

We rewrite the construction above in a way that is more convenient
for our purposes. Essentially, we give a construction of the left
hand endpoints of the intervals in $C_{k}.$ 

Fix a sequence $0<\beta_{j}<1/2.$ Then $\frac{1-\beta_{j}}{\beta_{j}}>1.$
For each $j\in\mathbb{N},$ let $D_{j}$ be a finite subset of the
closed interval $\left[0,\frac{1-\beta_{j}}{\beta_{j}}\right]$ containing
at least two elements satisfying $\left|d-d'\right|>1$ for all $d\neq d'$
in $D_{j}.$ Since $D_{j}$ is a subset of $\left[0,\frac{1-\beta_{j}}{\beta_{j}}\right]$,
the maps

\[
f_{j,d}(x):=\beta_{j}\left(x+d\right),x\in\mathbb{R},
\]
map the closed interval $\left[0,1\right]$ into itself, for all $j\in\mathbb{N}$
and all $d\in D_{j}.$ For a set $A,$ let 
\[
f_{j}\left(A\right):=\bigcup_{d\in D_{j}}f_{j,d}\left(A\right).
\]
Let $C_{0}:=[0,1],$ $C_{1}:=f_{1}\left(\left[0,1\right]\right),$
$C_{2}:=f_{1}\left(f_{2}\left(\left[0,1\right]\right)\right),$ etc.
Since $f_{k}\left(\left[0,1\right]\right)\subseteq\left[0,1\right]$
it follows that $C_{k}\subseteq C_{k-1}$ for $k\geq1.$  Let $b(0):=1,$
and 
\begin{equation}
b(k):=\beta_{1}\beta_{2}\cdots\beta_{k}\label{Sec-2-eq:b-Def}
\end{equation}
for $k\geq1.$ Clearly, 
\begin{eqnarray*}
C_{k} & = & \left\{ \sum_{j=1}^{k}d_{j}b(j)\mid d_{j}\in D_{j}\right\} +\left[0,b(k)\right]\\
 & = & \bigcup_{d_{1}\in D_{1},\ldots,d_{k}\in D_{k}}\left[\sum_{j=1}^{k}d_{j}b(j),b(k)+\sum_{j=1}^{k}d_{j}b(j)\right],
\end{eqnarray*}
for $k\geq1.$  The union is disjoint, since $\left|d_{j}-d_{j}'\right|>1$
for all $d_{j}\neq d_{j}'$ in $D_{j}.$ More precisely, if $m_{k}$
is the number of elements of $D_{k}$ and 
\[
D_{k}=\left\{ d_{k,j}\mid j=0,\ldots,m_{k}-1\right\} ,
\]
where $d_{k,j}<d_{k,j+1}$ for $j=0,1,\ldots,m_{k}-2$, then, in the
notation of Section \ref{sub:Interval-Construction}, $g_{k,0}=\beta_{k}d_{k,0},$
$g_{k,m_{k}}=1-\beta_{k}\left(d_{k,m_{k}-1}+1\right),$ and 
\[
g_{k,j}:=\beta_{k}\left(d_{k,j}-\left(d_{k,j-1}+1\right)\right),
\]
$j=1,\ldots,m_{k}-1.$ See also, Remark \ref{Rem:distance-between-intervals}.

Let 
\begin{equation}
C=C{}_{\beta_{1},\beta_{2},\ldots,D_{1},D_{2},\ldots}:=\bigcap_{k=0}^{\infty}C_{k}.\label{eq:C-definition}
\end{equation}

A \emph{basic interval of order $k$} is a closed interval of the
form 
\[
f_{1,d_{1}}\circ\cdots\circ f_{k,d_{k}}\left(\left[0,1\right]\right)=\sum_{j=1}^{k}d_{j}b(j)+\left[0,b(k)\right]=\left[\sum_{j=1}^{k}d_{j}b(j),b(k)+\sum_{j=1}^{k}d_{j}b(j)\right],
\]
where $d_{j}$ is in $D_{j},$ with the understanding that the interval
$C_{0}=[0,1]$ is the only basic interval of order $0.$ Note, $C_{k}$
is the union of the $\mu(k)$ basic intervals of order $k,$ where
\begin{equation}
\mu(k)=\mu_{D_{1},D_{2},\ldots}(k):=m_{1}m_{2}\cdots m_{k}.\label{eq:mu-Def}
\end{equation}
Since $C_{k+1}\subseteq C_{k}$ each basic interval of order $k+1$
is contained in some basic interval of order $k.$ In fact, each basic
interval of order $k$ contains $m_{k+1}$ basic intervals of order
$k+1.$ A \emph{simple interval of order $k$} is the convex hull
of two, not necessarily consecutive, basic intervals of order $k$\emph{.} 
\begin{rem}
\label{Rem:distance-between-intervals}If $x'=\sum_{j=1}^{k}d_{j,i_{j}'}b(j)$
and $x=\sum_{j=1}^{k}d_{j,i_{j}}b(j)$ are the left endpoints of consecutive
intervals in $C_{k},$ then there is an $1\leq\ell\leq k,$ such that
$i_{j}=i_{j}'$ for $j<\ell,$ $i_{\ell}=i_{\ell}'+1,$ and $i_{j}=0,$
$i_{j}'=m_{j}-1$ for $\ell<j\leq k.$ Hence, the gap between these
two intervals is 
\begin{align*}
g & =x-x'-b(k)\\
 & =\left(d_{\ell,i_{\ell}}-d_{\ell,i_{\ell}-1}\right)b\left(\ell\right)-\sum_{j=\ell+1}^{k}\left(d_{j,m_{j}-1}-d_{j,0}\right)b\left(j\right)-b\left(k\right)\\
 & \geq\left(d_{\ell,i_{\ell}}-d_{\ell,i_{\ell}-1}\right)b\left(\ell\right)-\sum_{j=\ell+1}^{k}\frac{1-\beta_{j}}{\beta_{j}}b\left(j\right)-b\left(k\right)\\
 & =\left(d_{\ell,i_{\ell}}-\left(d_{\ell,i_{\ell}-1}+1\right)\right)b\left(\ell\right)>0.
\end{align*}
A similar calculation is used, in the proof of Lemma \ref{lem:Positive-alphas-only},
to compare the lengths of certain simple intervals. 
\end{rem}

\subsection{Hausdorff Measure of Linear Cantor Sets}

If $\beta_{k}\geq\beta>0$ for all $k,$ then it is known, see e.g.,
\cite{Ma86,Ma87}, that the Hausdorff dimension of $C$ is 
\begin{equation}
s=s_{\beta_{1},\beta_{2},\ldots,D_{1},D_{2},\ldots}:=\liminf_{k\to\infty}\frac{\log\left(\mu(k)\right)}{-\log(b(k))}=\liminf_{k\to\infty}\frac{\log\left(m_{1}m_{2}\cdots m_{k}\right)}{-\log\left(\beta_{1}\beta_{2}\cdots\beta_{k}\right)}.\label{eq:Haudorff-dim-formula}
\end{equation}
In particular, the Hausdorff dimension of $C$ is determined solely
by the length and number of basic intervals at each stage. The Hausdorff
dimension of $C$ can depend on the $D_{j},$ in some surprising ways,
see e.g. \cite{PePh11}. A special case of Theorem \ref{Sec-7-thm:Main-Complete}
is 
\begin{thm}[Measure Formula Theorem]
\label{thm:Main} Let $0<\beta_{k}<1/2$ and
\begin{equation}
D_{k}=\left\{ d_{k,j}\mid j=0,\ldots,m_{k}-1\right\} \subset\left[0,\frac{1-\beta_{k}}{\beta_{k}}\right],\label{Sec-2-eq:Def-of-D}
\end{equation}
where 
\begin{align*}
\max\left\{ 2,\left(1+j-i\right)^{1/s}-1\right\}  & \leq d_{k,j}-d_{k,i}
\end{align*}
for all $0\leq i<j<m_{k}$ and all $k.$ Let $s$ be determined by
(\ref{eq:Haudorff-dim-formula}) and let $C$ be as in (\ref{eq:C-definition}).
Suppose $m_{k}\beta_{k}^{s}\leq1$ for all $k$, then the $s$--dimensional
Hausdorff measure of $C$ equals
\begin{equation}
L:=\liminf_{k\to\infty}\mu(k)b(k)^{s},\label{Sec-2-eq:Def-of-L}
\end{equation}
where $\mu(k)=m_{1}\cdots m_{k}$ and $b(k)=\beta_{1}\cdots\beta_{k}$
are as in (\ref{eq:mu-Def}) and (\ref{Sec-2-eq:b-Def}). \end{thm}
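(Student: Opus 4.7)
The hypothesis $m_k\beta_k^s\leq 1$ gives $\mu(k+1)b(k+1)^s=\mu(k)b(k)^s\cdot m_{k+1}\beta_{k+1}^s\leq\mu(k)b(k)^s$, so the sequence $\mu(k)b(k)^s$ is non-increasing and $L=\inf_k\mu(k)b(k)^s=\lim_k\mu(k)b(k)^s$. The upper bound $\mathcal{H}^s(C)\leq L$ is then immediate: the $\mu(k)$ basic intervals of order $k$ form a $b(k)$-cover of $C$ whose $s$-sum equals $\mu(k)b(k)^s$, and $b(k)\to 0$ because $\beta_k<1/2$.

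For the lower bound, the key geometric input is the following estimate. If $V$ is the convex hull of two basic intervals of order $k$, indexed $i<j$, sitting in a common basic interval of order $k-1$, then
\[
|V|=\beta_k(d_{k,j}-d_{k,i}+1)\,b(k-1)\geq\beta_k(1+j-i)^{1/s}b(k-1),
\]
so $|V|^s\geq(1+j-i)\,b(k)^s$. Consequently, any interval $U$ meeting $p\geq 2$ basic intervals of order $k$ all lying in a single order-$(k-1)$ parent satisfies $|U|^s\geq p\,b(k)^s$, since the extreme indices hit by $U$ satisfy $1+j-i\geq p$ and the convex hull $V$ is contained in $U$.

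By compactness of $C$ (and standard enlargement) it suffices to bound $\sum_\ell|U_\ell|^s$ from below over finite open covers $\{U_\ell\}_{\ell=1}^n$ of $C$ with $U_\ell\cap C\neq\emptyset$. For each $\ell$ let $k_\ell$ be the smallest integer such that $U_\ell$ meets at least two basic intervals of order $k_\ell$; this exists because $C$ is perfect (each $D_j$ has $m_j\geq 2$), so $U_\ell\cap C$ is infinite, while $b(k)\to 0$. Let $p_\ell$ be the number of such order-$k_\ell$ basic intervals met by $U_\ell$; the previous paragraph gives $|U_\ell|^s\geq p_\ell\,b(k_\ell)^s$. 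Fix $K\geq\max_\ell k_\ell$ and let $N_\ell$ be the number of order-$K$ basic intervals met by $U_\ell$. Each of the $p_\ell$ order-$k_\ell$ intervals met by $U_\ell$ contains exactly $\mu(K)/\mu(k_\ell)$ basic intervals of order $K$, so $N_\ell\leq p_\ell\,\mu(K)/\mu(k_\ell)$. Using $m_j\beta_j^s\leq 1$ in the form
\[
\frac{b(k_\ell)^s}{b(K)^s}=\prod_{j=k_\ell+1}^K\beta_j^{-s}\geq\prod_{j=k_\ell+1}^Km_j=\frac{\mu(K)}{\mu(k_\ell)},
\]
we get $|U_\ell|^s\geq p_\ell\,b(k_\ell)^s\geq N_\ell\,b(K)^s$.

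Since every order-$K$ basic interval meets $C$ and $\{U_\ell\}$ covers $C$, each order-$K$ basic interval is met by at least one $U_\ell$, so $\sum_\ell N_\ell\geq\mu(K)$, and summing yields $\sum_\ell|U_\ell|^s\geq\mu(K)b(K)^s\geq L$. Taking the infimum over $\delta$-covers and then letting $\delta\downarrow 0$ gives $\mathcal{H}^s(C)\geq L$, completing the proof. The principal obstacle is the separation estimate of the second paragraph; the hypothesis $d_{k,j}-d_{k,i}\geq(1+j-i)^{1/s}-1$ was designed precisely so that raising the simple-interval length to the power $s$ produces the count $1+j-i$, and the hypothesis $m_k\beta_k^s\leq 1$ then allows the per-level count to be transported down to a fixed reference level $K$ without loss. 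The bookkeeping of ``simple intervals'' foreshadowed in Remark \ref{Rem:distance-between-intervals} is what makes this transport rigorous.
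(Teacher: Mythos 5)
Your upper bound and the ``transport to a common level $K$'' step (using $m_j\beta_j^{s}\le 1$ to get $b(k_\ell)^{s}/b(K)^{s}\ge \mu(K)/\mu(k_\ell)$, then $\sum_\ell N_\ell\ge\mu(K)$) are both correct, and your adaptive choice of $k_\ell$ is a genuinely nice device: it forces all the order-$k_\ell$ basic intervals met by $U_\ell$ to lie in a single order-$(k_\ell-1)$ parent, which would let you bypass the multi-level induction that occupies Section \ref{sec:Proof-of-Lemma-Basic-vs-Simple} of the paper. But the pivotal inequality $|U_\ell|^{s}\ge p_\ell\,b(k_\ell)^{s}$ is not established. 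You justify it by asserting that the convex hull $V$ of the two extreme basic intervals $I_i,I_j$ is contained in $U_\ell$; this is false, since $U_\ell$ only \emph{meets} $I_i$ and $I_j$. An interval meeting both need only contain the gap between them, so the honest lower bound is
\[
|U_\ell|\;\ge\;\left(d_{k_\ell,j}-d_{k_\ell,i}-1\right)b(k_\ell),
\]
whereas your computation of $|V|$ uses $\left(d_{k_\ell,j}-d_{k_\ell,i}+1\right)b(k_\ell)$. The hypothesis $d_{k,j}-d_{k,i}\ge(1+j-i)^{1/s}-1$ is calibrated exactly for the convex hull, not for the gap: with the gap one only gets $|U_\ell|^{s}\ge\bigl((1+j-i)^{1/s}-2\bigr)^{s}b(k_\ell)^{s}$, which is strictly smaller than $(1+j-i)\,b(k_\ell)^{s}$ (for $s$ near $1$ and $j-i=1$ the hypothesis permits $d_{k,j}-d_{k,i}=2$, giving $|U_\ell|^{s}\ge b(k_\ell)^{s}$ instead of the needed $2\,b(k_\ell)^{s}$). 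Replacing $U_\ell$ by the convex hull of $U_\ell\cap C$ does not help, since $C\cap I_i$ can contain points arbitrarily close to the right end of $I_i$. As written, your argument yields a lower bound of the form $cL$ with $c<1$, not the exact value $L$.

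This meets-versus-contains issue is precisely what the paper's architecture is built to avoid. Lemma \ref{lem:Delta-Simple} uses a Lebesgue-number argument to fix a single order $k$ for which every order-$k$ basic interval is \emph{contained} in some $G_i$, and replaces $G_i$ by the convex hull $P_i$ of those basic intervals; then $|P_i|\le|G_i|$ and $P_i$ genuinely contains the intervals being counted. The price is that $P_i$ is a simple interval whose endpoints may differ at many levels, so one needs the full strength of Lemma \ref{lem:Basic-vs-Simple-Intervals} (proved via Assumptions 1 and 2a/2b and the three-case induction of Section \ref{sec:Proof-of-Lemma-Basic-vs-Simple}). Your adaptive $k_\ell$ trades that multi-level difficulty for the containment difficulty, but does not resolve the latter; under the stated separation hypothesis the two obstacles cannot both be dodged. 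To repair the proof you would need either to strengthen the separation hypothesis to $d_{k,j}-d_{k,i}\ge(1+j-i)^{1/s}+1$ (so the gap alone suffices), or to follow the paper's reduction to containment-based covers and then prove the simple-interval inequality at a fixed level.
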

\begin{rem}
It follows from the assumptions in Theorem \ref{thm:Main} that both
limit inferiors in (\ref{eq:Haudorff-dim-formula}) and (\ref{Sec-2-eq:Def-of-L})
are limits. In fact, $s$ is the supremum of the set $\left\{ \frac{\log\mu(k)}{-\log b(k)}\mid k\geq1\right\} $
and clearly $L\leq1$ and
\[
L=\inf\left\{ \left.\frac{\log\mu(k)}{-\log b(k)}\right|k\geq1\right\} .
\]
In particular, if $\sum_{k=1}^{\infty}\left(1-m_{k}\beta_{k}^{s}\right)$
is convergent, then $C$ is an $s$--set. See Section \ref{sec-6:Examples}
for details. 
\end{rem}
It was shown in \cite{QRS01} that (\ref{Sec-2-eq:Def-of-L}) determines
the Hausdorff measure of a class of homogeneous Cantor sers. In the
notation of Section \ref{sub:Interval-Construction} \emph{homogeneous
Cantor sets} are detemined by the conditions $g_{k,0}=g_{k,m_{k}}=0$
and $g_{k,j}=g_{k,1},$ for $j=1,\ldots,m_{k}-1$ and all $k.$ We
state the result from \cite{QRS01} in the notation of Theorem \ref{thm:Main}:
\begin{thm}[\cite{QRS01}]
 \label{Sec-2-thm:-QRS} Let $0<\beta_{k}<1/2$ and $m_{k}\geq2$
with $\beta_{k}m_{k}<1$ be given. Set $d_{k}:=\frac{1-\beta_{k}}{\beta_{k}\left(m_{k}-1\right)}$
and $d_{k,j}=jd_{k},$ $j=0,\ldots,m_{k}-1,$ Let $s$ be determined
by (\ref{eq:Haudorff-dim-formula}), let $C$ be as in (\ref{eq:C-definition}),
and let $L$ be determined by (\ref{Sec-2-eq:Def-of-L}). If $\beta_{k+1}\left(d_{k+1}-1\right)\leq d_{k}-1,$
then $L$ is the $s$--dimensional Hausdorff measure of $C$. 
\end{thm}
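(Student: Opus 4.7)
The plan is to establish both inequalities $\mathcal{H}^{s}(C)\le L$ and $\mathcal{H}^{s}(C)\ge L$; only the second is substantial. For the upper bound, the set $C_{k}$ is a cover of $C$ by $\mu(k)$ basic intervals of common diameter $b(k)$, so $\mathcal{H}^{s}_{b(k)}(C)\le\mu(k)\,b(k)^{s}$, and passing to the $\liminf$ gives $\mathcal{H}^{s}(C)\le L$.

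For the lower bound I would apply the mass distribution principle with the natural Bernoulli probability measure $\nu$ on $C$ assigning mass $1/\mu(k)$ to every basic interval of order $k$. The substantive claim is the pointwise density inequality
\[
L\,\nu(U)\le|U|^{s}
\]
for every closed interval $U$ meeting $C$. Once this is proved, summing over an arbitrary $\delta$-cover $\{U_{i}\}$ of $C$ yields $L=L\,\nu(C)\le\sum_{i}|U_{i}|^{s}$, whence $\mathcal{H}^{s}(C)\ge L$.

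To prove the density inequality, I would fix $U$ and let $k$ be the integer with $b(k+1)\le|U|<b(k)$. The hypothesis $\beta_{k+1}(d_{k+1}-1)\le d_{k}-1$ rewrites as $(d_{k+1}-1)b(k+1)\le(d_{k}-1)b(k)$, which says that the gaps between consecutive basic intervals are monotonically non-increasing as the refinement level increases. Combined with the homogeneous equal-spacing $d_{k,j}=j\,d_{k}$, this monotonicity lets one count the level-$(k+1)$ basic intervals meeting $U$ and estimate the $\nu$-mass of each partial intersection recursively. The canonical-cover estimates $\mu(j)\,b(j)^{s}$ at levels $j$ along which the $\liminf$ value $L$ is approached then convert the counting bound into the density inequality.

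The principal obstacle will be the recursive structure of this density estimate. When $U$ only partially meets the outermost level-$(k+1)$ basic intervals at each end, those fractional $\nu$-contributions must themselves be bounded by the density inequality applied at finer scales, and the induction must be arranged so that the gap-shrinking hypothesis exactly absorbs the accumulated error. This closing of the induction is the technical heart of the argument, and it is the point at which any weakening of the hypothesis $\beta_{k+1}(d_{k+1}-1)\le d_{k}-1$ would allow the recursion to lose constants and the equality $\mathcal{H}^{s}(C)=L$ to fail.
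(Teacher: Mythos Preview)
The paper does not contain a proof of this statement: Theorem~\ref{Sec-2-thm:-QRS} is quoted from \cite{QRS01} as a point of comparison with the authors' own Theorem~\ref{thm:Main}, and no argument for it is given here. So there is no ``paper's own proof'' to compare your proposal against.

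That said, it is worth noting that your mass-distribution approach is quite different from the method the paper uses for its own, related, measure formula (Theorems~\ref{thm:Main} and~\ref{Sec-7-thm:Main-Complete}). There the authors avoid density estimates entirely: they first reduce to covers by \emph{simple intervals} of a fixed order (Lemma~\ref{lem:Haudorff-measure-simple-intervals}), and then show via the separation Assumptions~1 and~2 that any simple-interval cover is no more efficient than the canonical basic-interval cover at the same stage (Lemma~\ref{lem:Basic-vs-Simple-Intervals}). The lower bound $H^{s}(C)\ge L$ then drops out directly from the infimum over basic-interval covers, with no measure $\nu$ and no recursive density argument. Your approach is the more classical one and is likely close in spirit to what Qu, Rao, and Su actually do; the paper's approach trades the analytic density recursion for a combinatorial comparison of cover classes.

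As a proof sketch of the QRS theorem on its own terms, your outline is reasonable but leaves the genuinely hard step---closing the recursion in the density inequality so that the gap-monotonicity hypothesis exactly absorbs the boundary contributions---entirely unspecified. You correctly identify this as the crux, but you have not indicated the inequality or induction scheme that makes it work, so the proposal is a plan rather than a proof.
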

In Remark \ref{Sec-6-Rem:QRS} we construct examples satisfying the
assumptions of Theorem \ref{thm:Main}, but not the assumptions of
Theorem \ref{Sec-2-thm:-QRS} and examples satisfying the assumptions
of Theorem \ref{Sec-2-thm:-QRS}, but not the assumptions of Theorem
\ref{thm:Main}. 

For self-similar set, (i.e. $\beta_{k}=\beta$ and $D_{k}=D$ for
all $k,$) satisfying an open set condition there is a different approach
to the calculation of the Hausdorff measure of $C$ in \cite{Ma86,Ma87}
and \cite{AySt99}.

\section{The Measure Theorem}

Equation (\ref{Sec-2-eq:Def-of-L}) states that we can find the Hausdorff
measure, by only considering covers by basic intervals at stage $k.$
So we need cover by basic interval to be more ``efficient'' than
covers by simple intervals. Hence, if $P$ is a simple interval at
stage $k$ containing the basis intervals $I_{j},$ then we need $|P|^{s}\geq\sum_{j}|I_{j}|^{s}.$
It turns out that this is a separation condition on the basic intervals. 
\begin{example}
\label{Sec-3:Example-3.1}We will illustrate the nature of this separation
condition in the special case where $\beta_{k}=\beta$ and $D_{k}=\left\{ 0,d_{k,1},n-1\right\} ,$
here $n:=1/\beta.$ Then $s=\log(3)/\log(n).$ By considering the
case where $P$ is the convex hull of an adjacent pairs of basic intervals,
we see that $|P|^{s}\geq|I_{1}|^{s}+|I_{2}|^{s}$ reduces to $(d_{k,1}+1)^{s}\geq2$
and $(n-d_{k,1})^{s}\geq2.$ We can write these conditions as
\[
\frac{\log(d_{k,1}+1)}{\log(n)}\geq\frac{\log(2)}{\log(3)}\quad\text{and}\quad\frac{\log(n-d_{k,1})}{\log(n)}\geq\frac{\log(2)}{\log(3)}.
\]
If $n=9,$ these conditions reduce to $3\leq d_{k,1}\leq5.$ If $n=81,$
these conditions reduce to $15\leq d_{k,1}\leq65.$ In particular,
the separation condition on the basic intervals must depend on $\beta.$ 
\end{example}
Let $0<\beta_{k}<1/2$ and let $D_{k}$ be as in (\ref{Sec-2-eq:Def-of-D}).
Let $t:=1/s,$ where $s$ is determined by (\ref{eq:Haudorff-dim-formula}).
Suppose $0\leq i,j<m_{k'}$ for some $k'\geq1.$ 
\begin{description}
\item [{Assumption-1}] If $i<j$ and $k'\leq k,$ then
\[
\left(\left(1+j-i\right)^{t}-1\right)\frac{b(k)}{b(k')}\left(\frac{\mu\left(k\right)}{\mu\left(k'\right)}\right)^{t}\leq d_{k',j}-d_{k',i}.
\]

\item [{Assumption-2a}] If $i<j$ and $k'<k,$ then
\[
1+\left(j-i\right)^{t}\frac{b(k)}{b(k')}\left(\frac{\mu\left(k\right)}{\mu\left(k'\right)}\right)^{t}\leq d_{k',j}-d_{k',i}.
\]

\item [{Assumption-2b}] If $i<j$, then 
\[
2\left(j-i\right)\leq d_{k',j}-d_{k',i}.
\]

\end{description}
Note, $\frac{\mu\left(k\right)}{\mu\left(k'\right)}=m_{k'+1}\cdots m_{k}$
and $\frac{b(k')}{b(k)}=\frac{1}{\beta_{k'+1}}\cdots\frac{1}{\beta_{k}}.$
So, when $k=k',$ Assump\-tion-1 states $\left(1+j-i\right)^{t}\leq1+d_{k',j}-d_{k',i}.$
Of course, Assumption-2b simply means that $2\leq d_{k',i+1}-d_{k',i}$
for all $0\leq i<m_{k'}-1.$

It is possible to have $k=k'$ in Assumption-1, but not in Assumption-2a.
This will be important in some of our examples in Section \ref{sec-6:Examples}.
The purpose of Assumptions 1 and 2 is that they allow us to show that
the most ``economical'' covers by simple intervals are the covers
by basic intervals. This is the content of the following lemma. 
\begin{lem}
\label{lem:Basic-vs-Simple-Intervals}Suppose Assumptions 1 and 2a
hold or Assumptions 1 and 2b hold. For any $k\geq1,$ and any simple
interval $P$ at stage $k,$ $|P|^{s}\geq i|I|^{s},$ where $|I|$
is the length of a basic interval at stage $k,$ $i$ is the number
of basic intervals at stage $k$ contained in $P,$ and $s$ is determined
by (\ref{eq:Haudorff-dim-formula}). 
\end{lem}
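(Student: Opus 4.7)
The plan is to prove the equivalent form $|P| \geq i^{1/s} b(k)$ (writing $t := 1/s \geq 1$) by induction on $m := k - \ell \geq 0$, where $\ell \in \{1, \ldots, k\}$ is the smallest index at which the index sequences $\alpha, \gamma$ of the basic intervals $I_\alpha, I_\gamma$ defining $P$ disagree. Set $p := \gamma_\ell - \alpha_\ell \geq 1$, $N := \mu(k)/\mu(\ell) = m_{\ell+1} \cdots m_k$, and $\lambda := b(k)/b(\ell)$; iterating $m_j \beta_j^s \leq 1$ yields $\lambda N^t \leq 1$, equivalently $b(\ell) \geq N^t b(k)$, a scaling identity used throughout. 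In the base case $m = 0$ the defining intervals differ only at level $k$, so $|P| = (d_{k,\gamma_k} - d_{k,\alpha_k} + 1) b(k)$ and $i = p+1$; Assumption-1 at $k' = k$ gives $d_{k,\gamma_k} - d_{k,\alpha_k} \geq (1+p)^t - 1$, whence $|P| \geq i^t b(k)$.

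For $m \geq 1$, let $A, B$ be the level-$\ell$ parents of $I_\alpha, I_\gamma$ and let $u_1 < \cdots < u_N$ be the common positions (relative to the parent's left endpoint) of the $N$ level-$k$ basic intervals inside either parent. With $I_\alpha$ at position $u_b$ in $A$ and $I_\gamma$ at position $u_a$ in $B$, one computes
\[
|P| = p_d \, b(\ell) + (u_a - u_b) + b(k), \qquad i = pN + 1 + q,
\]
where $p_d := d_{\ell,\gamma_\ell} - d_{\ell,\alpha_\ell}$ and $q := a - b \in \{-(N-1), \ldots, N-1\}$. When $q \geq 0$, Assumption-1 at $k' = \ell$ gives $p_d \, b(\ell) \geq ((1+p)^t - 1) N^t b(k)$, and the inductive hypothesis applied to the simple interval inside $A$ with defining endpoints at positions $u_b, u_a$ (whose first-difference level is at least $\ell+1$, so $m$ strictly drops) gives $u_a - u_b \geq ((q+1)^t - 1) b(k)$. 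Summing these along with the trailing $b(k)$, the target $|P| \geq (pN+1+q)^t b(k)$ reduces to $((1+p)N)^t - (pN+1+q)^t \geq N^t - (q+1)^t$, which is exactly the statement that the increment $y \mapsto (y+h)^t - y^t$ is non-decreasing in $y$ for $h \geq 0$, applied with $h = N-1-q \geq 0$ between $y = q+1$ and $y = pN+1+q$---a direct consequence of convexity of $y^t$ for $t \geq 1$. When $q < 0$, one has $i \leq pN$, hence $i^t \leq p^t N^t$; Assumption-2a supplies $p_d \, b(\ell) \geq b(\ell) + p^t N^t b(k)$, and combining with the crude estimate $u_a - u_b \geq -(b(\ell) - b(k))$ (from $d_{j,\cdot} \in [0, (1-\beta_j)/\beta_j]$) yields $|P| \geq (p^t N^t + 2) b(k) \geq i^t b(k)$. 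The variant with Assumption-2b in place of 2a proceeds analogously, using $p_d \geq 2p$ together with the scaling identity $b(\ell) \geq N^t b(k)$.

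The main obstacle is the convexity bookkeeping in sub-case $q \geq 0$: the unavoidable $-N^t b(k)$ loss in the Assumption-1 lower bound on $p_d \, b(\ell)$ must be cancelled precisely by the $+(q+1)^t b(k)$ gain from the inductive estimate on $u_a - u_b$, all while matching the combinatorial growth $(pN + 1 + q)^t$ of $i^t$. The convexity of $y \mapsto y^t$, ensured by $t \geq 1$ (equivalently $s \leq 1$, a consequence of the Cantor-set geometry together with $m_j \beta_j^s \leq 1$), is exactly what makes this cancellation close.
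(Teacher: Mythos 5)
Your base case, your $q\ge 0$ step, and your $q<0$ step under Assumption-2a are all correct, and they amount to the paper's own argument in a cleaner package: your convexity inequality $((1+p)N)^{t}-(pN+1+q)^{t}\ge N^{t}-(q+1)^{t}$ is exactly Lemma \ref{lem:Magic-Inequality} with $A=q+1$, $B=p$, $C=N$; your dichotomy on the sign of $q$ coincides with the paper's dichotomy on the sign of the next nonzero digit difference (the two signs agree because the tail $\sum_{j>\ell}\alpha_j m_{j+1}\cdots m_k$ has the sign of its leading term); and your $q<0$ computation under Assumption-2a, where the ``$1+\cdots$'' in that assumption supplies the $b(\ell)$ needed to absorb the worst-case deficit $u_a-u_b\ge -(b(\ell)-b(k))$, is the paper's Case 3.

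The gap is your last sentence: the Assumption-2b variant does \emph{not} proceed analogously. In the case $q<0$ your only control on the deficit is the crude bound $u_a-u_b\ge -(b(\ell)-b(k))$, and Assumption-2b gives only $p_d\,b(\ell)\ge 2p\,b(\ell)$, so your chain produces $|P|\ge (2p-1)b(\ell)+2b(k)$, to be compared with $i^{t}b(k)$ where $i$ can be as large as $pN$. Since the only comparison available between $b(\ell)$ and $b(k)$ is $b(\ell)\ge N^{t}b(k)$, closing the estimate would require $2p-1+2N^{-t}\ge p^{t}$, which fails for every $p\ge 2$ once $t>\log(2p-1)/\log p$ (e.g.\ $p=4$, $t=1.5$ gives $7+2N^{-t}<8=4^{t}$), and nothing in Assumptions 1 and 2b excludes this regime; Corollary \ref{cor:A1 with small s} only rescues you when $s\le\log 2/\log 3$. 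Replacing $2p\,b(\ell)$ by the Assumption-1 bound $((1+p)^{t}-1)N^{t}b(k)$ is worse, since the loss $-(b(\ell)-b(k))$ is then unbounded relative to $N^{t}b(k)$. This is precisely why the paper devotes Lemma \ref{lem:Positive-alphas-only} to the 2b case: a simple interval with a negative digit difference is \emph{rearranged} into one with the same count $i$, all digit differences non-negative, and length no larger, after which the Assumption-1-only argument applies. The verification $|Q|\le|P|$ there hinges on the exact telescoping $\sum_{j=k''+1}^{k'-1}\tfrac{1-\beta_j}{\beta_j}b(j)=b(k'')-b(k'-1)$ being cancelled by the slack $2b(k'')$ coming from $g_{k''}(1)\ge 2$ --- a cancellation that your crude worst-case bound on $u_a-u_b$ discards. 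You need this rearrangement (or an equivalent device) to complete the 2b branch.
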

We prove Lemma \ref{lem:Basic-vs-Simple-Intervals} in Section \ref{sec:Proof-of-Lemma-Basic-vs-Simple}.
This is the key technical result in this paper. 

Let $\alpha\geq0$ be a real number. Recall, e.g., \cite{Fal85},
the $\alpha$--dimensional Hausdorff measure of a set $C$ is 
\[
H^{\alpha}(C):=\lim_{\delta\to0}H_{\delta}^{\alpha}(C),
\]
where 
\[
H_{\delta}^{\alpha}(C):=\inf\left\{ \sum\left|G_{i}\right|^{\alpha}\mid C\subseteq\bigcup G_{i},|G_{i}|<\delta,C\subset\bigcup G_{i}\right\} .
\]
The infimum is over all countable covers of $C,$ by sets $G_{i}$
satisfying $\left|G_{i}\right|<\delta.$ The infimum is not changed
if we only consider countable covers by open intervals, or only consider
countable covers by closed intervals. 

The usefulness of simple intervals is illustrated by: 
\begin{lem}
\label{lem:Haudorff-measure-simple-intervals}The $s$--dimensional
Hausdorff measure $H^{s}(C)$ of $C$ equals 
\[
\lim_{\delta\to0}\inf\left\{ \sum\left|P_{i}\right|^{s}\mid C\subseteq\bigcup P_{i},P_{i}\text{ simple interval of order }0\leq k,\left|P_{i}\right|<\delta\right\} ,
\]
where $|P|$ denotes the diameter of the set $P$ and $s$ is determined
by (\ref{eq:Haudorff-dim-formula}). 
\end{lem}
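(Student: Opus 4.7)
The plan is to establish the two inequalities separately. Writing $S_\delta(C)$ for the restricted infimum appearing on the right-hand side, the inequality $H^s(C)\le\lim_{\delta\to0}S_\delta(C)$ is immediate, since every simple interval is a closed interval, so the family of admissible covers by simple intervals of diameter $<\delta$ is contained in the family of all closed-interval covers of diameter $<\delta$, whose infimum is exactly $H_\delta^s(C)$.

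For the reverse inequality, I would approximate an arbitrary closed-interval cover by a cover consisting of simple intervals of comparable $s$-sum. Fix $\epsilon>0$ and $\delta>0$, and let $\{G_i\}$ be a cover of $C$ by closed intervals of diameter less than $\delta$; discard those $G_i$ that miss $C$. Since $b(k)<2^{-k}\to0$, for each remaining $G_i$ one may choose $k_i\in\mathbb{N}$ with $b(k_i)<\epsilon|G_i|$. Let $P_i$ be the convex hull of all basic intervals of order $k_i$ that meet $G_i$. Because $P_i$ is then the convex hull of the leftmost and the rightmost such basic interval (with the degenerate case of a single such basic interval $I$ handled by noting that $I$ is itself a simple interval, being the convex hull of $I$ with itself), $P_i$ is a simple interval of order $k_i$. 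A routine endpoint estimate shows $|P_i|\le|G_i|+2b(k_i)<(1+2\epsilon)|G_i|$.

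To check that $\{P_i\}$ still covers $C$, let $x\in C$ and pick $j$ with $x\in G_j$. Because $C\subseteq C_{k_j}$, the point $x$ lies in some basic interval $I$ of order $k_j$; this $I$ meets $G_j$ at $x$, so $I\subseteq P_j$ and hence $x\in P_j$. Summing and taking the infimum over the cover $\{G_i\}$ therefore yields $S_{(1+2\epsilon)\delta}(C)\le(1+2\epsilon)^sH_\delta^s(C)$. Letting $\delta\to0$ and then $\epsilon\to0$ gives $\lim_{\delta\to0}S_\delta(C)\le H^s(C)$, which combined with the first inequality completes the proof.

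The argument is essentially combinatorial bookkeeping, and the main point that requires care is verifying that the convex hull construction always produces a \emph{simple} interval in the precise sense of the paper (handling the single-basic-interval degeneracy). Observe that Lemma \ref{lem:Basic-vs-Simple-Intervals} is not needed for this lemma; it will be invoked later to replace the infimum over simple-interval covers by a sum over basic intervals.
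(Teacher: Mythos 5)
Your argument is correct for the statement read literally (covers by simple intervals whose orders may differ from one $P_i$ to the next), but it follows a genuinely different route from the paper and yields a slightly weaker conclusion than what the paper proves and later uses. The paper deduces the lemma from Lemma~\ref{lem:Delta-Simple}, which gives the exact equality $H^{\alpha}_{\delta}(C)=\inf\left\{ \sum|P_i|^{\alpha}\right\}$ for each fixed $\delta$, with all $P_i$ of one \emph{common} order $k$: one passes to an open cover, uses $C_N\to C$ and a Lebesgue number of the compact set $C_N$ to find a single $k$ such that every basic interval of order $k$ is \emph{contained} in some $G_i$, and takes $P_i$ to be the hull of the basic intervals contained in $G_i$, so that $|P_i|\le|G_i|$ and no multiplicative loss occurs. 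You instead pick a separate $k_i$ for each $G_i$ with $b(k_i)<\epsilon|G_i|$ and hull the basic intervals \emph{meeting} $G_i$, accepting the factor $(1+2\epsilon)^{s}$, which is harmless after $\delta\to0$, $\epsilon\to0$; this is exactly the ``different orders'' variant sanctioned by the Remark following Lemma~\ref{lem:Delta-Simple}, and your verification that the hull is a simple interval and still covers $C$ is sound.

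Two caveats. First, the common-order form is the one actually invoked in the proof of Theorem~\ref{Sec-7-thm:Main-Complete}, and your construction does not deliver it: for an infinite cover the orders $k_i$ are unbounded, and a simple interval of order $k'$ is in general \emph{not} a simple interval of any higher order (the endpoints $0$ and $\frac{1-\beta_j}{\beta_j}$ need not belong to $D_j$), so you cannot simply pass to $\max_i k_i$. The repair is the paper's compactness step: reduce to open covers (this does not change $H^{s}_{\delta}$), extract a finite subcover of the compact set $C$, set $k:=\max_i k_i$, and replace each $P_i$ by the convex hull of the order-$k$ basic intervals contained in it; this is a simple interval of order $k$, lies inside $P_i$, and still contains $C\cap P_i$, since any basic interval meeting a simple interval of order $k_i\le k$ is contained in it. Second, a minor point: your choice of $k_i$ requires $|G_i|>0$, so degenerate closed intervals in the cover must first be enlarged by an arbitrarily small amount.
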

Since the number of basic intervals of order $k$ is finite, so is
the number of simple intervals of order $k.$ In particular, the sum
$\sum\left|P_{i}\right|^{s}$ is finite. A result, similar to Lemma
\ref{lem:Haudorff-measure-simple-intervals}, can be found in \cite{Ma86}.
We give a simple proof in Section \ref{sec:Proof-of-Lemma-Haudorff-measure-simple-intervals]}. 

As an immediate consequence of Lemma \ref{lem:Basic-vs-Simple-Intervals}
and Lemma \ref{lem:Haudorff-measure-simple-intervals} we have:
\begin{thm}[Measure Theorem]
\label{thm:Measure-Theorem}If Assumptions 1 and 2a hold or Assumptions
1 and 2b hold, then the $s$--dimensional Hausdorff measure of $C$
equals
\[
\lim_{\delta\to0}\inf\left\{ \sum\left|I_{i}\right|^{s}\mid C\subseteq\bigcup I_{i},I_{i}\text{ basic intervals all of the same order},\left|I_{i}\right|<\delta\right\} .
\]
Here $s$ is determined by (\ref{eq:Haudorff-dim-formula}). 
\end{thm}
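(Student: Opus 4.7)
The plan is to sandwich the $s$-dimensional Hausdorff measure between the two sides of the asserted equality. The easy direction, $H^{s}(C)\leq\lim_{\delta\to 0}\inf\{\cdots\}$, is immediate: every basic-interval cover at a common order is an admissible $\delta$-cover of $C$ in the sense of the definition of $H^{s}_{\delta}$, so the restricted infimum on the right-hand side dominates $H^{s}_{\delta}(C)$ for every $\delta>0$, and letting $\delta\to 0$ yields the bound.

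For the reverse inequality I would invoke Lemma \ref{lem:Haudorff-measure-simple-intervals} to replace $H^{s}(C)$ by the limit of infimums over simple-interval covers of arbitrary orders. Given $\varepsilon,\delta>0$, pick a cover $\{P_{i}\}$ of $C$ by simple intervals with $|P_{i}|<\delta$ and $\sum|P_{i}|^{s}$ within $\varepsilon$ of the simple-cover infimum; write $k_{i}$ for the order of $P_{i}$. Lemma \ref{lem:Basic-vs-Simple-Intervals} gives $|P_{i}|^{s}\geq n_{i}\,b(k_{i})^{s}$, where $n_{i}$ is the number of basic intervals of order $k_{i}$ inside $P_{i}$. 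Replacing each $P_{i}$ by those $n_{i}$ basic intervals yields a (possibly infinite, multi-order) basic cover of $C$ whose $s$-sum has not increased; coverage is preserved since $P_{i}\cap C\subseteq P_{i}\cap C_{k_{i}}$ equals the union of those basic intervals.

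Next I would promote the cover to a single common order. A slight open enlargement of each closed basic interval combined with the compactness of $C$ extracts a finite subcover at the cost of an additional $\varepsilon$ in the $s$-sum. Letting $K$ be the largest order appearing, subdivide each basic interval of order $k<K$ into its $\mu(K)/\mu(k)=m_{k+1}\cdots m_{K}$ descendants of order $K$; this multiplies its $s$-contribution by $\prod_{j=k+1}^{K}m_{j}\beta_{j}^{s}$. The crucial observation is that Assumption 1 applied with $k'=k$, $i=0$, $j=m_{k}-1$, together with $D_{k}\subseteq[0,(1-\beta_{k})/\beta_{k}]$, gives $m_{k}^{1/s}\leq 1+d_{k,m_{k}-1}-d_{k,0}\leq 1/\beta_{k}$, i.e., $m_{k}\beta_{k}^{s}\leq 1$ for every $k$. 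Consequently every factor in the product is at most $1$ and the subdivision never increases the $s$-sum. The final cover consists of basic intervals of common order $K$ and length $b(K)<\delta$, with $s$-sum at most the simple-cover $\delta$-infimum plus $2\varepsilon$; letting $\varepsilon\to 0$ and then $\delta\to 0$ completes the reverse inequality.

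The main obstacle is not conceptual but bookkeeping: one must notice that Assumption 1 secretly enforces $m_{k}\beta_{k}^{s}\leq 1$, which is precisely what makes the common-order refinement step cost-free. With that in hand, the two lemmas and a standard compactness argument weld together into the Measure Theorem.
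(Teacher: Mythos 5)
Your argument is correct in substance and rests on the same two pillars as the paper's, namely Lemmas \ref{lem:Basic-vs-Simple-Intervals} and \ref{lem:Haudorff-measure-simple-intervals}, but you take a longer route than necessary. The paper's proof really is immediate: Lemma \ref{lem:Delta-Simple}, which underlies Lemma \ref{lem:Haudorff-measure-simple-intervals}, already produces for each $\delta$ a finite cover of $C$ by simple intervals all of one common order $k$ (the Lebesgue-number argument there does the order normalization for you), so one only has to replace each such $P_i$ by the basic intervals of order $k$ it contains, which Lemma \ref{lem:Basic-vs-Simple-Intervals} allows without increasing the $s$-sum. Your extra machinery --- compactness to extract a finite subcover and the subdivision down to a common top order $K$ --- is needed only because you read the simple-interval lemma in its mixed-order form (which the remark after Lemma \ref{lem:Delta-Simple} does permit). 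Within that route, your observation that Assumption 1 forces $m_k\beta_k^s\leq 1$ is exactly inequality (\ref{eq:0-beta}) of Lemma \ref{lem:A-1-Alternative}; the paper exploits the same fact in the equivalent form $\sum\bigl|I^{(k)}\bigr|^s\geq\sum\bigl|I^{(k+1)}\bigr|^s$ (a basic interval of order $k$ is a simple interval of order $k+1$) in the proof of Theorem \ref{Sec-7-thm:Main-Complete}. One step of yours needs repair as written: after extracting a finite subcover of the open enlargements, the cover consists of the enlargements, not of basic intervals, so the subdivision step does not literally apply to it, and the basic intervals corresponding to the chosen enlargements need not cover $C$ a priori. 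The fix is routine: choose each enlargement shorter than the gaps separating its basic interval from the other basic intervals of the same order, so that $U_i\cap C=I_i\cap C$ (each $I_i\cap C$ is relatively clopen in $C$); then the finite subcover of enlargements yields a finite subcover by the $I_i$ themselves and no $\varepsilon$ is lost --- or simply use Lemma \ref{lem:Delta-Simple} as stated and skip the promotion step altogether.
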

For any $k,$ there is a finite number of basic intervals of order
$k,$ so the sum $\sum\left|I_{i}\right|^{s}$ is finite.

\section{Proof of Lemma \ref{lem:Haudorff-measure-simple-intervals} \label{sec:Proof-of-Lemma-Haudorff-measure-simple-intervals]}}

Lemma \ref{lem:Haudorff-measure-simple-intervals} is a direct consequence
of the following lemma.
\begin{lem}
\label{lem:Delta-Simple}For any real number $\alpha\geq0$ and any
$\delta>0$ 
\[
H_{\delta}^{\alpha}(C)=\inf\left\{ \sum\left|P_{i}\right|^{\alpha}\mid0\leq k,P_{i}\text{ simple interval of order }k,|P_{i}|<\delta,C\subset\bigcup P_{i}\right\} ,
\]
where $|Q|$ is the diameter of the set $Q$ and the infimum is over
all $k\geq0$ and over all (necessarily finite) covers of $C$ by
simple intervals $P_{i}$ of order $k$ satisfying $\left|P_{i}\right|<\delta.$ \end{lem}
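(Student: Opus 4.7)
The plan is to prove the two inequalities separately. The inequality $H_\delta^\alpha(C)\le\text{RHS}$ is immediate: any cover of $C$ by simple intervals of a common order $k$ with diameters strictly less than $\delta$ is admissible in the definition of $H_\delta^\alpha(C)$, so the infimum on the right runs over a subclass of the covers considered in the infimum defining $H_\delta^\alpha(C)$.

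For the reverse inequality, my strategy is to start from a near-optimal cover of $C$ by arbitrary small sets and refine it to a cover by simple intervals of a sufficiently high \emph{common} order $k$, at negligible cost. Fix $\epsilon>0$. Using compactness of $C$ together with standard approximations (replace each cover element by a slightly larger open interval, chosen so that the sum of $\alpha$-powers grows by at most $\epsilon$, then extract a finite subcover), I may select a finite open-interval cover $\{G_1,\ldots,G_n\}$ of $C$ with $|G_i|<\delta$ and $\sum_{i=1}^n|G_i|^\alpha<H_\delta^\alpha(C)+\epsilon$.

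Now, for each $k\ge 1$, let $\mathcal{B}_k$ be the collection of basic intervals of order $k$, set $\mathcal{B}_{k,i}:=\{B\in\mathcal{B}_k:B\cap G_i\ne\emptyset\}$, and let $P_i^{(k)}$ be the convex hull of $\bigcup\mathcal{B}_{k,i}$, discarding indices where $\mathcal{B}_{k,i}$ is empty. Since $P_i^{(k)}$ is the convex hull of two basic intervals of order $k$ (the leftmost and rightmost elements of $\mathcal{B}_{k,i}$), it is a simple interval of order $k$. Any $x\in C$ lies in some $B\in\mathcal{B}_k$ and some $G_i$, so $B\in\mathcal{B}_{k,i}$ and $x\in B\subseteq P_i^{(k)}$; hence $\{P_i^{(k)}\}$ covers $C$. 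Because every $B\in\mathcal{B}_{k,i}$ meets $G_i$ and has length $b(k)$, the interval $P_i^{(k)}$ lies in the closed $b(k)$-neighborhood of $G_i$, yielding the key diameter estimate $|P_i^{(k)}|\le|G_i|+2b(k)$.

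Since $b(k)\to 0$ as $k\to\infty$ and each $|G_i|<\delta$, for $k$ large we have $|P_i^{(k)}|<\delta$ for every $i$; and continuity of $t\mapsto t^\alpha$ on $[0,\delta]$, together with finiteness of the cover, gives $\sum_i|P_i^{(k)}|^\alpha<\sum_i|G_i|^\alpha+\epsilon<H_\delta^\alpha(C)+2\epsilon$ once $k$ is sufficiently large. This yields $\text{RHS}\le H_\delta^\alpha(C)+2\epsilon$, and letting $\epsilon\to 0$ completes the proof. The only mild subtlety I anticipate is arranging a single common order $k$ to serve all $n$ cover elements simultaneously; this is where compactness (producing a finite cover) is essential, combined with $b(k)\to 0$. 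I do not foresee any deeper obstacle.
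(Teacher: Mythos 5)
Your proof is correct, and the overall architecture matches the paper's: pass to a near-optimal cover by open intervals of diameter less than $\delta$, fix a sufficiently high common order $k$, group the basic intervals of order $k$ according to the cover elements, and take convex hulls to produce the simple-interval cover. The difference lies in how the key diameter/cost estimate is obtained. The paper first uses $C_N\to C$ in the Hausdorff metric to get $C_N\subseteq\bigcup G_i$, then takes a Lebesgue number $\varepsilon$ of this cover of the compact set $C_N$ and chooses $k$ with $b(k)<\varepsilon$, so that each basic interval of order $k$ is \emph{contained} in some $G_i$; the resulting hulls satisfy $|P_i|\le|G_i|$ exactly, and the comparison $\sum|P_i|^\alpha\le\sum|G_i|^\alpha$ is lossless, with no limiting argument needed. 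You instead assign to $G_i$ every basic interval that merely \emph{meets} $G_i$, which is automatically nonvacuous and lets you skip both the Hausdorff-metric step and the Lebesgue-number step, at the price of the inflation $|P_i^{(k)}|\le|G_i|+2b(k)$; you then need the finiteness of the subcover, the strict inequality $|G_i|<\delta$, and continuity of $t\mapsto t^\alpha$ to absorb the loss as $k\to\infty$, giving the equality of infima up to an arbitrary $\epsilon$. Both arguments are valid for the lemma as stated (which only asserts equality of the two infima); the paper's version is slightly stronger in that it converts each individual admissible cover into a simple-interval cover of no greater cost, while yours is more elementary in its ingredients. One shared, harmless convention both proofs rely on is that a single basic interval counts as a (degenerate) simple interval, since your $\mathcal{B}_{k,i}$, like the paper's $Q_i$, may consist of only one basic interval.
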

\begin{rem}
The proof shows we may replace $k\geq0$ by $k\in A,$ where $A$
is any (fixed) infinite set of positive integers, and also that we
may allow simple intervals of different orders.  \end{rem}
\begin{proof}
Since covers by simple intervals are a subset of all covers we have
\begin{align*}
H_{\delta}^{\alpha}(C) & \leq\inf\left\{ \sum\left|P_{i}\right|^{\alpha}\mid P_{i}\text{ simple interval},|P_{i}|<\delta,C\subset\bigcup P_{i}\right\} \\
 & \leq\inf\left\{ \sum\left|P_{i}\right|^{\alpha}\mid k\geq0,P_{i}\text{ simple interval of order }k,|P_{i}|<\delta,C\subset\bigcup P_{i}\right\} .
\end{align*}
We establish the reverse inequality. Suppose $\bigcup G_{i}$ is a
countable open cover of $C$ such that $\left|G_{i}\right|<\delta$
for all $i.$ We must show there is a $k\geq0$ and a cover $\bigcup P_{i}$
of $C$ by simple intervals $P_{i}$ of order $k,$ such that $\left|P_{i}\right|<\delta$
and $\sum\left|P_{i}\right|^{\alpha}\leq\sum\left|G_{i}\right|^{\alpha}.$
Since $C_{N}\to C$ as $N\to\infty$ with respect to the Hausdorff
metric, there is an $N$ such that $C_{N}$ is contained in $\bigcup G_{i}.$
Let $\varepsilon>0$ be a Lebesgue number associated with the open
cover $\bigcup G_{i}$ of the compact set $C_{N}.$ Then any subset
of $C_{N}$ with diameter $<\varepsilon$ must be contained in some
$G_{i}.$ Fix $k\geq N$ such that $b\left(k\right)<\varepsilon,$
then any $n-$ary interval in $C_{k}$ is contained in some $G_{i}.$
Let $Q_{i}$ be the union of the $n-$ary intervals in $C_{k}$ that
are contained in $G_{i}.$ Then $Q_{i}\subseteq G_{i}$ and $\bigcup Q_{i}$
is a cover of $C_{k},$ consequently also of $C,$ and 
\[
\sum\left|Q_{i}\right|^{\alpha}\leq\sum\left|G_{i}\right|^{\alpha}.
\]
If $P_{i}$ is the convex hull of $Q_{i}$, then $P_{i}$ is a simple
interval with the same diameter as $Q_{i}.$ Since $Q_{i}$ is a subset
of $G_{i}$ and $G_{i}$ has diameter $<\delta$ it follows that $\bigcup P_{i}$
is a cover by simple intervals of order $k$ each of diameter $<\delta.$ 
\end{proof}

\section{Proof of Lemma \ref{lem:Basic-vs-Simple-Intervals}.\label{sec:Proof-of-Lemma-Basic-vs-Simple}}

For $\eta_{j}\in\left\{ 0,1,\ldots,m_{j-1}\right\} $ set $\phi_{j}\left(\eta_{j}\right):=d_{j,\eta_{j}}.$
At stage $k,$ the left hand endpoints of the basic intervals are
of the form 
\begin{equation}
x=\sum_{j=1}^{k}\phi_{j}\left(\varepsilon_{j}\right)b(j).\label{eq:Left-End-Point}
\end{equation}
Consequently, the right hand endpoints of the basic intervals are
of the form 
\begin{equation}
y=b(k)+\sum_{j=1}^{k}\phi_{j}\left(\eta_{j}\right)b(j).\label{eq:Right-End-Point}
\end{equation}
The number of basic intervals contained in $[0,y]$ is 
\[
i=1+\sum_{j=1}^{k}\eta_{j}\frac{\mu(k)}{\mu(j)}=1+\sum_{j=1}^{k}\eta_{j}m_{j+1}m_{j+2}\cdots m_{k}
\]
and the number of basic intervals contained in $[0,x)$ is 
\[
i=\sum_{j=1}^{k}\varepsilon_{j}\frac{\mu(k)}{\mu(j)}=\sum_{j=1}^{k}\varepsilon_{j}m_{j+1}m_{j+2}\cdots m_{k}.
\]
Let $x,y$ be as in (\ref{eq:Left-End-Point}) and (\ref{eq:Right-End-Point}).
Suppose $x<y,$ let $k'$ be the smallest subscript, if any, for which
$\eta_{k'}-\varepsilon_{k'}\neq0,$ then $\eta_{k'}-\varepsilon_{k'}>0.$
Furthermore, any simple interval at stage $k$ is of the form $P=[x,y]$
for some $x<y$ as in (\ref{eq:Left-End-Point}) and (\ref{eq:Right-End-Point}).
The length $|P|=y-x$ of the simple interval $P=[x,y]$ is 
\begin{equation}
|P|=b(k)+\sum_{j=1}^{k}\left(\phi_{j}\left(\eta_{j}\right)-\phi_{j}\left(\varepsilon_{j}\right)\right)b(j)\label{eq:Length-Of-P}
\end{equation}
and the number of basic intervals at stage $k$ contained in $P$
is 
\begin{equation}
i=i_{P}=1+\sum_{j=1}^{k}\left(\eta_{j}-\varepsilon_{j}\right)m_{j+1}m_{j+2}\cdots m_{k}.\label{eq:Number-Of-Basic-Intervals-In-P}
\end{equation}

Let $s$ be determined by (\ref{eq:Haudorff-dim-formula}). We wish
to show that 
\begin{equation}
i^{t}\leq|P|/|I|,\label{eq:Goal-P}
\end{equation}
where $t=1/s,$ $P$ is a simple interval at stage $k,$ and $I$
is a basic interval at stage $k.$ Suppose $P=[x,y],$ where $x<y$
are as in (\ref{eq:Left-End-Point}) and (\ref{eq:Right-End-Point}). 

To simplify the notation, let $\alpha_{j}:=\eta_{j}-\varepsilon_{j}$
and 
\[
g_{j}\left(\alpha\right):=\min\left\{ \phi_{j}\left(\eta\right)-\phi_{j}\left(\varepsilon\right)\mid\eta,\varepsilon\in D_{j},\alpha=\eta-\varepsilon\right\} .
\]
Using this notation (\ref{eq:Goal-P}) takes the form 
\begin{equation}
\left(1+\sum_{j=1}^{k}\alpha_{j}m_{j+1}m_{j+2}\cdots m_{k}\right)^{t}\leq1+\sum_{j=1}^{k}g_{j}\left(\alpha_{j}\right)\frac{b(j)}{b(k)}.\label{eq:Goal-Alpha}
\end{equation}

Note $1-m_{j}\leq\alpha_{j}\leq m_{j}-1$ and $1-n\leq g_{j}\left(\alpha_{j}\right)\leq n-1.$
Given $\alpha_{j,}$ by definition of $g_{j},$ there are sequences
$\eta'_{j},$ $\varepsilon'_{j}$ such that $\phi_{j}\left(\eta'_{j}\right)-\phi_{j}\left(\varepsilon'_{j}\right)=g_{j}(\alpha_{j}).$
Hence, if (\ref{eq:Goal-P}) is to hold for all simple intervals at
stage $k,$ we need (\ref{eq:Goal-Alpha}) to hold for all sequences
$\alpha_{j},$ such that $\sum_{j=1}^{k}\alpha_{j}m_{j+1}m_{j+2}\cdots m_{k}\geq0.$
With this notation we can restate Assumptions 1 and 2a as: 
\begin{description}
\item [{Assumption-1'}] $\left(\left(1+\alpha_{k'}\right)^{t}-1\right)\left(m_{k'+1}\cdots m_{k}\right)^{t}\leq g_{k'}\left(\alpha_{k'}\right)\frac{b(k')}{b(k)},$
if $0<\alpha_{k'}$ and $1\leq k'\leq k$.
\item [{Assumption-2'}] $\left(\alpha_{k'}m_{k'+1}\cdots m_{k}\right){}^{t}\leq\left(g_{k'}\left(\alpha_{k'}\right)-1\right)\frac{b(k')}{b(k)},$
if $0<\alpha_{k'}$ and $1\leq k'<k$.
\end{description}
The following elementary inequality is useful.
\begin{lem}
\label{lem:Magic-Inequality}The inequality 
\[
\left(A+BC\right)^{t}\leq A^{t}+\left(\left(1+B\right)^{t}-1\right)C^{t},
\]
holds for $1\leq t,$ $0\leq A\leq C$ and $0\leq B.$ \end{lem}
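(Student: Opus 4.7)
The plan is to normalize the inequality and then reduce it to a one-variable monotonicity statement. Since $C=0$ forces $A=0$ and the inequality becomes trivial, I will assume $C>0$ and divide both sides by $C^t$. Setting $u:=A/C$, the desired inequality becomes
\[
(u+B)^t \leq u^t + (1+B)^t - 1,
\]
which, rearranged, reads
\[
(u+B)^t - u^t \leq (1+B)^t - 1^t.
\]
The hypothesis $0\leq A\leq C$ translates to $u\in[0,1]$, while $B\geq0$ and $t\geq 1$ are preserved.

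Next I would define $f(u):=(u+B)^t - u^t$ on $[0,1]$ and show that $f$ is non-decreasing. Differentiating gives
\[
f'(u) = t(u+B)^{t-1} - t u^{t-1}.
\]
Since $B\geq 0$, we have $u+B\geq u\geq 0$, and because $t-1\geq 0$ the power function $x\mapsto x^{t-1}$ is non-decreasing on $[0,\infty)$. Hence $f'(u)\geq 0$ throughout $[0,1]$.

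From $u\leq 1$ it therefore follows that $f(u)\leq f(1)=(1+B)^t - 1$, which is exactly the required inequality. Multiplying back through by $C^t$ yields the lemma. There is no real obstacle here; the only point that needs a moment of care is the boundary case $C=0$ (handled separately as above) and the observation that $t-1\geq 0$ is what makes the monotonicity of $f$ go through, explaining why the hypothesis $t\geq 1$ cannot be weakened.
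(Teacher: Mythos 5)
Your proof is correct and follows essentially the same route as the paper's: after your normalization $u=A/C$, showing $f(u)=(u+B)^t-u^t$ is non-decreasing and comparing with its value at $u=1$ is exactly the paper's comparison of $\varphi(A)=(A+BC)^t$ and $\psi(A)=A^t+\bigl((1+B)^t-1\bigr)C^t$ via their derivatives, with equality at $A=C$. The separate treatment of $C=0$ is a harmless extra detail.
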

\begin{proof}
Let $\varphi(A):=\left(A+BC\right)^{t}$ and $\psi(A):=A^{t}+\left(\left(1+B\right)^{t}-1\right)C^{t}.$
Then $\varphi(C)=\psi(C),$ $\varphi'(A)=t\left(A+BC\right)^{t-1},$
and $\psi'(A)=tA^{t-1}.$ Since $BC\geq0$ and $t\geq1,$ it follows
that $\varphi'(A)\geq\psi'(A).$ Consequently, $\varphi(A)\leq\psi(A)$
for all $A\leq C.$ Thus the stated inequality holds.
\end{proof}

\subsection{Proof of Lemma \ref{lem:Basic-vs-Simple-Intervals} under Assumptions
1 and 2b}

For the purposes of our proof, it is convenient to consider positive
values of $\alpha_{j}$. The following Lemma shows that this assumption
is sufficient to prove the result.
\begin{lem}
\label{lem:Positive-alphas-only} Let $D_{k}$ be a sequence of sets
such that $d_{k,j+1}-d_{k,j}\ge2$ for all $k\in\mathbb{N}$ and $0\le j<m_{k}-1$.
For any simple interval $P$ at stage $k$ there exists a simple interval
$Q$ at stage $k$ such that $\left|Q\right|\le\left|P\right|$, $i_{P}=i_{Q}$,
and $\alpha'_{j}\ge0$ for all $1\le j\le k$.\end{lem}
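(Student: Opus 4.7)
The plan is to argue by induction on $\nu$, the number of indices $j \in \{1, \ldots, k\}$ with $\alpha_j = \eta_j - \varepsilon_j < 0$. If $\nu = 0$, take $Q = P$. For $\nu \ge 1$, I will construct a simple interval $\widetilde Q$ at stage $k$ with $i_{\widetilde Q} = i_P$, $|\widetilde Q| \le |P|$, and having exactly $\nu - 1$ negative $\alpha'_j$, and then iterate at most $k$ times. Let $j_0$ be the smallest index with $\alpha_{j_0} < 0$. Since $P$ is nondegenerate, its leftmost nonzero $\alpha$ is positive, so some $j < j_0$ has $\alpha_j > 0$; let $j_1$ be the largest such. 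Minimality of $j_0$ and maximality of $j_1$ force $\alpha_j = 0$ for $j_1 < j < j_0$. Define the encoding $(\varepsilon', \eta')$ of $\widetilde Q$ to coincide with $(\varepsilon, \eta)$ outside $[j_1, j_0]$, with $\varepsilon'_{j_1} = \varepsilon_{j_1}$ and $\eta'_{j_1} = \eta_{j_1} - 1$, with $\varepsilon'_j = 0$ and $\eta'_j = m_j - 1$ for $j_1 < j < j_0$, and with $\varepsilon'_{j_0} = 0$ and $\eta'_{j_0} = \alpha_{j_0} + m_{j_0}$. All primed coordinates lie in $\{0, \ldots, m_j - 1\}$, and the new $\alpha'_j$ equal $\alpha_j$ off $[j_1, j_0]$ while taking values $\alpha_{j_1} - 1 \ge 0$, $m_j - 1 > 0$, and $\alpha_{j_0} + m_{j_0} > 0$ on it, so $\widetilde Q$ loses precisely the negative at $j_0$.

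For the count, (\ref{eq:Number-Of-Basic-Intervals-In-P}) gives
\[
i_{\widetilde Q} - i_P = (m_{j_0+1}\cdots m_k)\Bigl[-m_{j_1+1}\cdots m_{j_0} + \sum_{j=j_1+1}^{j_0-1}(m_j - 1)\, m_{j+1}\cdots m_{j_0} + m_{j_0}\Bigr],
\]
and the bracket vanishes by the mixed-radix identity $m_{j_1+1}\cdots m_{j_0} = m_{j_0} + \sum_{j=j_1+1}^{j_0-1}(m_j-1)\, m_{j+1}\cdots m_{j_0}$. For the length, (\ref{eq:Length-Of-P}) shows $|\widetilde Q| - |P|$ is supported on $[j_1, j_0]$. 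At $j_1$ the gap condition $d_{j_1, \eta_{j_1}} - d_{j_1, \eta_{j_1}-1} \ge 2$ gives contribution at most $-2\, b(j_1)$. At each intermediate level, the spread bound $d_{j, m_j-1} - d_{j, 0} \le (1-\beta_j)/\beta_j$ yields contribution at most $b(j-1) - b(j)$, and these telescope to $b(j_1) - b(j_0-1)$. At $j_0$ both terms in the difference are bounded in absolute value by $(1-\beta_{j_0})/\beta_{j_0}$, giving contribution at most $2[b(j_0-1) - b(j_0)]$. Summing,
\[
|\widetilde Q| - |P| \;\le\; -2\, b(j_1) + [b(j_1) - b(j_0-1)] + 2[b(j_0-1) - b(j_0)] \;=\; -b(j_1) + b(j_0-1) - 2\, b(j_0) \;\le\; -2\, b(j_0) \;<\; 0,
\]
since $b$ is non-increasing and $j_1 \le j_0 - 1$.

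The hard part is designing a single move that preserves the basic-interval count while strictly decreasing length. Maxing out the intermediate levels is forced by the count-preservation identity above; the gap condition at level $j_1$ provides a $-2\, b(j_1)$ savings that precisely dominates the telescoped spread contributions from the intermediate and $j_0$ levels, by the same telescoping mechanism used in Remark \ref{Rem:distance-between-intervals}. Iterating the construction at most $\nu$ times then produces the desired $Q$ with all $\alpha'_j \ge 0$, $i_Q = i_P$, and $|Q| \le |P|$.
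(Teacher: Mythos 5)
Your proposal is correct and follows essentially the same route as the paper: the same single replacement move (decrement $\eta$ at the last positive index $j_1=k''$ before the first negative index $j_0=k'$, set the intermediate levels to $m_j-1$, and shift level $j_0$ by $m_{j_0}$), the same mixed-radix identity to preserve $i_P$, and the same telescoping estimate showing the $-2b(j_1)$ savings from the gap condition dominates, with the iteration merely reindexed as induction on the number of negative $\alpha_j$ rather than on $k'$.
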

\begin{proof}
Let $P=[x,y],$ where $x<y$ are as in (\ref{eq:Left-End-Point})
and (\ref{eq:Right-End-Point}). The result is trivial, if $\alpha_{j}\ge0$
for all $1\le j\le k$. Let $k'$ be the smallest subscript for which
$\alpha_{k'}=\eta_{k'}-\varepsilon_{k'}<0$, then $\alpha_{j}\ge0$
for all $1\le j<k'\le k$. We will show that $Q=\left[x',y'\right]$
can be constructed by induction on $k'$.

Let $k''$ denote the largest subscript less than $k'$ such that
$\alpha_{k''}>0$. Since $x<y$ such a subscript exits. Then $\alpha_{j}=0$
for $j=k''+1,\ldots,k'-1.$ 

Let $x':=\sum_{j=1}^{k}\phi_{j}\left(\varepsilon'_{j}\right)b(j)$
and $y':=b(k)+\sum_{j=1}^{k}\phi_{j}\left(\eta'_{j}\right)b(j),$
where 
\begin{align*}
\varepsilon'_{j}:=\begin{cases}
\varepsilon_{j} & \text{ if }1\le j\le k''\\
0 & \text{ if }k''<j\le k'\\
\varepsilon_{j} & \text{ if }k'<j\le k
\end{cases} & \quad\text{and} & \eta'_{j}:=\begin{cases}
\eta_{j} & \text{ if }1\le j<k''\\
\eta_{j}-1 & \text{ if }j=k''\\
m_{j}-1 & \text{ if }k''<j<k'\\
m_{j}+\alpha_{j} & \text{ if }j=k'\\
\eta_{j} & \text{ if }k'<j\le k
\end{cases}
\end{align*}
and let $Q:=\left[x',y'\right].$ By construction $Q$ is a simple
interval at stage $k.$ Clearly, $\alpha'_{j}\ge0$ for all $1\le j\le k'\le k$.
Since, 
\begin{align*}
 & \alpha_{k''}m_{k''+1}\cdots m_{k}+\alpha_{k'}m_{k'+1}\cdots m_{k}\\
 & =\left(\alpha_{k''}-1\right)m_{k''+1}\cdots m_{k}+\left(\sum_{j=k''+1}^{k'-1}\left(m_{j}-1\right)m_{j+1}\cdots m_{k}\right)\\
 & \quad+\left(m_{k'}+\alpha_{k'}\right)m_{k'+1}\cdots m_{k}.
\end{align*}
we conclude $i_{Q}=i_{P}.$ Finally, since $g_{k''}\left(1\right)\ge2,$
$\phi_{j}\left(m_{j}-1\right)\leq\frac{1-\beta_{j}}{\beta_{j}}$,
$-\frac{1-\beta_{k'}}{\beta_{k'}}\leq\phi_{k'}\left(\eta_{k'}\right)-\phi_{k'}\left(\varepsilon_{k'}\right)<0,$
and $0\leq\phi_{k'}\left(m_{k'}+\alpha_{k'}\right)\leq\frac{1-\beta_{k'}}{\beta_{k'}},$
then
\begin{align*}
|P|-|Q| & =\left(\phi_{k''}\left(\eta_{k''}\right)-\left(\phi_{k''}\left(\eta{}_{k''}-1\right)\right)\right)b\left(k''\right)-\left(\sum_{j=k''+1}^{k'-1}\phi_{j}\left(m_{j}-1\right)b\left(j\right)\right)\\
 & \quad+\left(\phi_{k'}\left(\eta_{k'}\right)-\phi_{k'}\left(\varepsilon_{k'}\right)-\phi_{k'}\left(m_{k'}+\alpha_{k'}\right)\right)b\left(k'\right)\\
 & \ge2b\left(k''\right)-\left(\sum_{j=k''+1}^{k'-1}\frac{1-\beta_{j}}{\beta_{j}}b\left(j\right)\right)-2\frac{1-\beta_{k'}}{\beta_{k'}}b\left(k'\right)\\
 & =2b\left(k''\right)-\left(b\left(k''\right)-b\left(k'-1\right)\right)-2\left(1-\beta_{k'}\right)b\left(k'-1\right)\\
 & =b\left(k''\right)-b\left(k'-1\right)+2b\left(k'\right)\\
 & \geq2b\left(k'\right).
\end{align*}
 Hence, $\left|Q\right|\le\left|P\right|$. 
\end{proof}
We now have the tools to prove Lemma \ref{lem:Basic-vs-Simple-Intervals}:
\begin{proof}[Proof of Lemma \ref{lem:Basic-vs-Simple-Intervals}]
According to Lemma \ref{lem:Positive-alphas-only}, it is sufficient
to consider simple intervals $P$ such that $\alpha_{j}\ge0$ for
all $1\le j\le k$. If $\alpha_{j}=0$ for all $1\leq j\leq k,$ then
(\ref{eq:Goal-Alpha}) simplifies to $1^{t}\leq1.$ Hence we will
assume $\alpha_{j}\neq0$ for some $j.$ Let $k'$ be the smallest
subscript for which $\mbox{\ensuremath{\alpha}}_{k'}\neq0$. 

For each $k'\le i<k$, let

\begin{align*}
A_{i} & :=1+\sum_{j=i+1}^{k}\alpha_{j}m_{j+1}\cdots m_{k},\\
B_{i} & :=\alpha_{i},\text{ and }\\
C_{i} & :=m_{i+1}\cdots m_{k}.
\end{align*}

Since $A_{i}\leq C_{i}$ and $B_{i}=\alpha_{i}\ge0$ for all $k'\le i<k$,
we can apply Lemma \ref{lem:Magic-Inequality} so that

\begin{align*}
\left(1+\sum_{j=k'}^{k}\alpha_{j}m_{j+1}\cdots m_{k}\right)^{t} & =\left(1+\alpha_{k'}m_{k'+1}\cdots m_{k}+\sum_{j=k'+1}^{k}\alpha_{j}m_{j+1}\cdots m_{k}\right)^{t}\\
 & \leq\left(1+\sum_{j=k'+1}^{k}\alpha_{j}m_{j+1}\cdots m_{k}\right)^{t}\\
 & \quad+\left(\left(1+\alpha_{k'}\right)^{t}-1\right)\left(m_{k'+1}\cdots m_{k}\right)^{t}.
\end{align*}
Continuing in this manner, after a finite number of substitutions
we obtain:
\begin{align*}
\left(1+\sum_{j=i}^{k}\alpha_{j}m_{j+1}\cdots m_{k}\right)^{t} & \leq\left(1+\alpha_{k}\right)^{t}+\sum_{j=k'}^{k-1}\left(\left(1+\alpha_{j}\right)^{t}-1\right)\left(m_{j+1}\cdots m_{k}\right)^{t}\\
 & \le1+g_{k}\left(\alpha_{k}\right)+\sum_{j=k'}^{k-1}g_{j}\left(\alpha_{j}\right)\frac{\beta(j)}{\beta(k)},
\end{align*}
where the last step requires Assumption 1. Hence, equation (\ref{eq:Goal-Alpha})
is satisfied.
\end{proof}

\subsection{Proof of Lemma \ref{lem:Basic-vs-Simple-Intervals} under Assumptions
1 and 2a}

If $\alpha_{j}=0$ for all $1\leq j\leq k,$ then (\ref{eq:Goal-Alpha})
simplifies to $1^{t}\leq1.$ Hence, we will assume $\alpha_{j}\neq0$
for some $j.$ Let $k'$ be the smallest subscript for which $\mbox{\ensuremath{\alpha}}_{k'}\neq0.$
Then $x<y$ implies $\alpha_{k'}>0.$ We will show by induction on
$k',$ that if $\alpha_{k'}>0,$ then 
\begin{align}
 & \left(1+\alpha_{k'}m_{k'+1}\cdots m_{k}+\sum_{j=k'+1}^{k}\alpha_{j}m_{j+1}\cdots m_{k}\right)^{t}\label{eq:Induction-Goal}\\
 & \leq1+g_{k'}\left(\alpha_{k'}\right)\frac{b(k')}{b(k)}+\sum_{j=k'+1}^{k}g_{j}\left(\alpha_{j}\right)\frac{b(j)}{b(k)}.\nonumber 
\end{align}
The basis case is $k'=k.$ In this case we must show 
\begin{equation}
\left(1+\alpha_{k}\right)^{t}\leq1+g_{k}\left(\alpha_{k}\right).\label{eq:Assumption-0}
\end{equation}
But, this is Assumption 1', with $k'=k.$ 

Inductively, suppose 
\begin{align}
 & \left(1+\alpha_{\widetilde{k}}m_{\widetilde{k}+1}\cdots m_{k}+\sum_{j=\widetilde{k}+1}^{k}\alpha_{j}m_{j+1}\cdots m_{k}\right)^{t}\label{eq:Ind-Hyp}\\
 & \leq1+g_{\widetilde{k}}\left(\alpha_{\widetilde{k}}\right)\frac{b\left(\widetilde{k}\right)}{b(k)}n^{k-\widetilde{k}}+\sum_{j=\widetilde{k}+1}^{k}g_{j}\left(\alpha_{j}\right)\frac{b(j)}{b(k)}.\nonumber 
\end{align}
holds for all $\widetilde{k}>k'$ with $\alpha_{\widetilde{k}}>0$$.$
We must show that (\ref{eq:Induction-Goal}) holds assuming $\alpha_{k'}>0.$ 

Either $\alpha_{j}=0$ for all $k'<j\leq k,$ or there is a smallest
$k'',$ such that $k'<k''\leq k$ and $\alpha_{k''}\neq0.$ Hence,
we will consider the three cases: 
\begin{enumerate}
\item $\alpha_{j}=0$ for all $k'<j\leq k,$ 
\item $\alpha_{j}=0$ for all $k'<j<k''\leq k,$ and $\alpha_{k''}>0,$
and 
\item $\alpha_{j}=0$ for all $k'<j<k''\leq k,$ and $\alpha_{k''}<0.$ 
\end{enumerate}
Note, $k''=1+k'$ is possible. Each case requires a separate argument. 

\textbf{Case 1.} In this case (\ref{eq:Goal-Alpha}) simplifies to
\begin{equation}
\left(1+\alpha_{k'}m_{k'+1}m_{k'+2}\cdots m_{k}\right)^{t}\leq1+g_{k'}\left(\alpha_{k'}\right)\frac{b(k')}{b(k)}.\label{eq:Assumption-1}
\end{equation}
 Setting $A=1,$ $B=\alpha_{k},$ and $C=m_{k'+1}\cdots m_{k}$ in
Lemma \ref{lem:Magic-Inequality} and using Assumption 1' we see that
\begin{align*}
 & \left(1+\alpha_{k'}m_{k'+1}m_{k'+2}\cdots m_{k}\right)^{t}\\
 & \leq1+\left(\left(1+\alpha_{k'}\right)^{t}-1\right)\left(m_{k'+1}m_{k'+2}\cdots m_{k}\right)^{t}\\
 & \leq1+g_{k'}\left(\alpha_{k'}\right)\frac{b\left(k'\right)}{b(k)}.
\end{align*}
This established (\ref{eq:Assumption-1}).

\textbf{Case 2.} In this case (\ref{eq:Goal-Alpha}) can be restated
as 
\begin{align}
 & \left(1+\alpha_{k'}m_{k'+1}\cdots m_{k}+\alpha_{k''}m_{k''+1}\cdots m_{k}+\sum_{j=k''+1}^{k}\alpha_{j}m_{j+1}\cdots m_{k}\right)^{t}\label{Sec-5-eq:Case-2-goal}\\
 & \leq1+g_{k'}\left(\alpha_{k'}\right)\frac{b\left(k'\right)}{b(k)}+g_{k''}\left(\alpha_{k''}\right)\frac{b\left(k''\right)}{b(k)}+\sum_{j=k''+1}^{k}g_{j}\left(\alpha_{j}\right)\frac{b\left(j\right)}{b(k)}\nonumber 
\end{align}
where $\alpha_{k''}>0$ and $g_{k''}\left(\alpha_{k''}\right)>0.$ 

Using Lemma \ref{lem:Magic-Inequality} with
\begin{align*}
A & =1+\alpha_{k''}m_{k''+1}\cdots m_{k}+\sum_{j=k''+1}^{k}\alpha_{j}m_{j+1}\cdots m_{k},\\
B & =\alpha_{k'}\\
C & =m_{k'+1}\cdots m_{k}
\end{align*}
we have the first inequality in 
\begin{align*}
 & \left(1+\alpha_{k'}m_{k'+1}\cdots m_{k}+\alpha_{k''}m_{k''+1}\cdots m_{k}+\sum_{j=k''+1}^{k}\alpha_{j}m_{j+1}\cdots m_{k}\right)^{t}\\
 & \leq\left(1+\alpha_{k''}m_{k''+1}\cdots m_{k}+\sum_{j=k''+1}^{k}\alpha_{j}m_{j+1}\cdots m_{k}\right)^{t}\\
 & \quad+\left(\left(1+\alpha_{k'}\right)^{t}-1\right)\left(m_{k'+1}\cdots m_{k}\right)^{t}\\
 & \leq1+g_{k''}\left(\alpha_{k''}\right)\frac{b\left(k''\right)}{b(k)}+\sum_{j=k''+1}^{k}g_{j}\left(\alpha_{j}\right)\frac{b\left(j\right)}{b(k)}+\left(\left(1+\alpha_{k'}\right)^{t}-1\right)\left(m_{k'+1}\cdots m_{k}\right)^{t}.
\end{align*}
The second inequality follows from the inductive hypothesis (\ref{eq:Ind-Hyp})
with $\widetilde{k}=k''.$ Hence, (\ref{Sec-5-eq:Case-2-goal}) follows
from Assumption 1'.

\textbf{Case 3.} As in Case 2 (\ref{eq:Goal-Alpha}) can be restated
as (\ref{Sec-5-eq:Case-2-goal}), but now, $\alpha_{k''}<0$ and $g_{k''}\left(\alpha_{k''}\right)<0.$
Then 
\begin{align*}
 & 1+\alpha_{k''}m_{k''+1}\cdots m_{k}+\sum_{j=k''+1}^{k}\alpha_{j}m_{j+1}\cdots m_{k}\\
 & \leq1+\alpha_{k''}m_{k''+1}\cdots m_{k}+\sum_{j=k''+1}^{k}\left(m_{j}-1\right)m_{j+1}\cdots m_{k}\\
 & =\left(\alpha_{k''}+1\right)m_{k''+1}\\
 & \leq0.
\end{align*}
So 
\begin{align*}
0 & \leq\left(1+\alpha_{k'}m_{k'+1}\cdots m_{k}+\alpha_{k''}m_{k''+1}\cdots m_{k}+\sum_{j=k''+1}^{k}\alpha_{j}m_{j+1}\cdots m_{k}\right)^{t}\\
 & \leq\left(\alpha_{k'}m_{k'+1}\cdots m_{k}\right)^{t}.
\end{align*}
On the other hand, since $g_{j}\left(\alpha_{j}\right)\geq-\frac{1-\beta_{j}}{\beta_{j}}$
for $k''\leq j\leq k,$ we have 
\begin{align*}
 & 1+g_{k'}\left(\alpha_{k'}\right)\frac{b\left(k'\right)}{b(k)}+g_{k''}\left(\alpha_{k''}\right)\frac{b\left(k''\right)}{b(k)}+\sum_{j=k''+1}^{k}g_{j}\left(\alpha_{j}\right)\frac{b\left(j\right)}{b(k)}\\
 & \geq1+g_{k'}\left(\alpha_{k'}\right)\frac{b\left(k'\right)}{b(k)}-\sum_{j=k''}^{k}\frac{1-\beta_{j}}{\beta_{j}}\frac{b\left(j\right)}{b(k)}\\
 & =g_{k'}\left(\alpha_{k'}\right)\frac{b\left(k'\right)}{b(k)}-\frac{b\left(k''-1\right)}{b(k)}\\
 & \geq\left(g_{k'}\left(\alpha_{k'}\right)-1\right)\frac{b\left(k'\right)}{b(k)}.
\end{align*}
Hence, (\ref{Sec-5-eq:Case-2-goal}) follows from Assumption 2'. This
completes the proof of Lemma \ref{lem:Basic-vs-Simple-Intervals}.

\section{Examples\label{sec-6:Examples}}

In this section we construct some examples illustrating various aspects
of our results. To facilitate the construction of the examples we
restate Assumption-1 and Assumption-2a in simpler forms, see Lemma
\ref{lem:A-1-Alternative} and Lemma \ref{lem:A-2-Alternative}. These
lemmas are also used in the proof of Theorem \ref{thm:Main}. We also
investigate some of the consequences of Assumption-1, -2a, and -2b.

\subsection{Assumption-1}

In this section we investigate Assumption-1. We begin by re-stating
Assumption-1 in a simpler form. The first consequence of this is that
the limit inferior used in (\ref{eq:Haudorff-dim-formula}) is in
fact a limit. Another consequence is that $\beta_{k}^{s}m_{k}$ is
``nearly'' equal to one, see Lemma \ref{Sec-6-lem:6.3}, Corollary
\ref{cor: Sets of density zero}, and Corollary \ref{cor:Constant-beta}
for interpretations of the term ``nearly''. We show, Corollary \ref{cor:Constant-beta},
that, if $\beta_{k}=\beta,$ then the extreme points $0$ and $\frac{1-\beta}{\beta}$
must be in $D_{k}$ for ``nearly'' all $k.$ We conclude this section
with an example illustrating that the dimension $s$ can be close
to one, when Assumption-1 holds. 
\begin{lem}
\label{lem:A-1-Alternative}Assumption-1 is equivalent to 
\begin{equation}
\beta_{k}^{s}m_{k}\leq1,\label{eq:0-beta}
\end{equation}
 and 
\begin{equation}
\left(1+j-i\right)^{t}-1\leq d_{k,j}-d_{k,i}\label{eq:1-beta}
\end{equation}
 for all $k\geq1$ and all $0\leq i<j<m_{k}.$ \end{lem}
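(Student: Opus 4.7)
The plan is to prove both implications using the single factorization
\[
\frac{b(k)}{b(k')}\left(\frac{\mu(k)}{\mu(k')}\right)^{t}=\prod_{\ell=k'+1}^{k}\beta_{\ell}m_{\ell}^{t},
\]
together with the observation that (\ref{eq:0-beta}) is, after raising to the power $t=1/s$, precisely the statement that each factor satisfies $\beta_{\ell}m_{\ell}^{t}\leq 1$.

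For the forward direction, I would first set $k=k'$ in Assumption-1; then the product above is empty and equals $1$, so the inequality collapses to (\ref{eq:1-beta}). To extract (\ref{eq:0-beta}), apply (\ref{eq:1-beta}) at the extreme indices $i=0$, $j=m_{k}-1$; since $D_{k}\subset[0,(1-\beta_{k})/\beta_{k}]$, the right-hand side is bounded above by $1/\beta_{k}-1$, and hence $m_{k}^{t}\leq 1/\beta_{k}$, which upon raising to the $s$-th power is exactly $\beta_{k}^{s}m_{k}\leq 1$.

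For the reverse direction, (\ref{eq:0-beta}) forces $\beta_{\ell}m_{\ell}^{t}\leq 1$ for every $\ell$, so the displayed product is $\leq 1$ (with the empty product covering the trivial case $k=k'$). Since $(1+j-i)^{t}-1\geq 0$ when $i<j$, multiplying (\ref{eq:1-beta}) through by this product only weakens its left-hand side and immediately yields Assumption-1. The argument is essentially bookkeeping and presents no real obstacle; the only points requiring care are the exponent identity $st=1$ used to translate between the $\beta_{\ell}^{s}m_{\ell}\leq 1$ and $\beta_{\ell}m_{\ell}^{t}\leq 1$ forms, and the empty-product convention when $k=k'$.
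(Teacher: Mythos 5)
Your proposal is correct and follows essentially the same route as the paper: setting $k'=k$ in Assumption-1 to obtain (\ref{eq:1-beta}), deducing (\ref{eq:0-beta}) from the extreme indices $i=0$, $j=m_{k}-1$ together with $D_{k}\subset\left[0,\frac{1-\beta_{k}}{\beta_{k}}\right]$, and for the converse noting that (\ref{eq:0-beta}) makes the factor $\frac{b(k)}{b(k')}\left(\frac{\mu(k)}{\mu(k')}\right)^{t}=\prod_{\ell=k'+1}^{k}\beta_{\ell}m_{\ell}^{t}$ at most $1$. The only difference is that you spell out the converse, which the paper leaves as ``easy to see.''
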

\begin{proof}
Suppose Assumption-1 holds. Setting $k'=k$ in Assumption-1 shows
that (\ref{eq:1-beta}) holds. Setting $k'=k,$ $i=0$ and $j=m_{k}-1$
in Assumption-1 gives 
\[
m_{k}^{t}-1\leq d_{k,m_{k}-1}-d_{k,0}.
\]
Since $d_{k,0}$ and $d_{k,m_{k}-1}$ are in $\left[0,\frac{1-\beta_{k}}{\beta_{k}}\right]$
also $d_{k,m_{k}-1}-d_{k,0}\leq\frac{1-\beta_{k}}{\beta_{k}}.$ Hence,
$m_{k}^{t}-1\leq\frac{1-\beta_{k}}{\beta_{k}}$ and therefore 
\[
m_{k}^{t}\leq\frac{1}{\beta_{k}}.
\]
This established (\ref{eq:0-beta}). Conversely, it is easy to see
that (\ref{eq:0-beta}) and (\ref{eq:1-beta}) implies Assumption-1. \end{proof}
\begin{cor}
If Assumption-1 holds, then the limit inferior in (\ref{eq:Haudorff-dim-formula})
and in (\ref{Sec-2-eq:Def-of-L}) are limits.\end{cor}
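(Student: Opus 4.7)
The plan is to use Lemma \ref{lem:A-1-Alternative} to reduce Assumption-1 to the single numerical inequality $\beta_k^{s} m_k \leq 1$ for every $k$, and then handle the two sequences separately.

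For the limit in (\ref{Sec-2-eq:Def-of-L}), I would simply observe that
\[
\mu(k+1)\, b(k+1)^{s} = m_{k+1}\beta_{k+1}^{s}\cdot \mu(k)\, b(k)^{s} \leq \mu(k)\, b(k)^{s},
\]
so the sequence $\{\mu(k)b(k)^{s}\}$ is non-increasing. Since it is also non-negative, it converges, and consequently its $\liminf$ equals both its limit and its infimum. This also justifies the formula for $L$ given in the remark.

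For the limit in (\ref{eq:Haudorff-dim-formula}), I would take logarithms of $\beta_k^{s} m_k \leq 1$ (using that $\beta_k < 1/2$, so $-\log \beta_k > 0$) to get $\frac{\log m_k}{-\log \beta_k} \leq s$ for every $k$. Applying the elementary mediant inequality (if $a_j/c_j \leq s$ for all $j$ with $c_j>0$, then $(\sum a_j)/(\sum c_j) \leq s$) to $a_j = \log m_j$ and $c_j = -\log \beta_j$ yields
\[
\frac{\log \mu(k)}{-\log b(k)} = \frac{\sum_{j=1}^{k}\log m_j}{\sum_{j=1}^{k}(-\log \beta_j)} \leq s
\]
for every $k \geq 1$. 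Hence $\limsup_k \frac{\log \mu(k)}{-\log b(k)} \leq s$, and combined with the definition $s = \liminf_k \frac{\log \mu(k)}{-\log b(k)}$, the sequence converges to $s$. The bound $\frac{\log \mu(k)}{-\log b(k)} \leq s$ together with the convergence identifies $s$ as the supremum of the sequence, as stated in the remark.

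There is no real obstacle here beyond correctly invoking Lemma \ref{lem:A-1-Alternative} and using the monotonicity/mediant observations; the entire argument is a two-line consequence once the inequality $\beta_k^{s} m_k \leq 1$ is in hand.
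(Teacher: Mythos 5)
Your proposal is correct and follows essentially the same route as the paper: the inequality $\beta_k^{s}m_k\le 1$ from Lemma \ref{lem:A-1-Alternative} gives termwise $\frac{\log\mu(k)}{-\log b(k)}\le s$ (the paper bounds the denominator by $\log(m_1^t\cdots m_k^t)$, which is the same computation as your mediant step), so the $\liminf$ defining $s$ is a limit, and the same inequality makes $\mu(k)b(k)^{s}$ non-increasing, so the $\liminf$ defining $L$ is a limit.
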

\begin{proof}
It follows from (\ref{eq:0-beta}) that 
\[
\frac{\log\left(m_{1}\cdots m_{j}\right)}{-\log\left(\beta_{1}\cdots\beta_{j}\right)}\leq\frac{\log\left(m_{1}\cdots m_{j}\right)}{\log\left(m_{1}^{t}\cdots m_{j}^{t}\right)}=\frac{1}{t}=s.
\]
Hence, the limit inferior in (\ref{eq:Haudorff-dim-formula}) is a
limit. 

By (\ref{eq:Haudorff-dim-formula}) the $\mu(k)b(k)^{s}$ is decreasing.
Hence, the limit inferior in (\ref{Sec-2-eq:Def-of-L}) is a limit. \end{proof}
\begin{rem}
Clearly, (\ref{eq:0-beta}) implies $L\leq1.$ Since $\prod_{k=1}^{\infty}\left(1+p_{k}\right)$
converges to a positive value if and only if $\sum_{k=1}^{\infty}p_{k}$
is absolutely convergent, it follows from (\ref{eq:0-beta}) that
$0<L$ if and only if $\sum_{k=1}^{\infty}\left(1-m_{k}\beta_{k}^{s}\right)$
is convergent. 
\end{rem}
Our next few results, Lemma \ref{Sec-6-lem:6.3}, Corollary \ref{cor: Sets of density zero},
and Corollary \ref{cor:Constant-beta} show that, when Assumption-1
holds, there is ``nearly'' equality in (\ref{eq:0-beta}). 
\begin{lem}
\label{Sec-6-lem:6.3}For $0<\xi<1$ let $F_{\xi,k}$ be the integers
$1\leq j\leq k$ for which $\beta_{j}^{s}m_{j}<\xi$. If Assumption-1
holds, then
\begin{equation}
\lim_{k\to\infty}\frac{\#F_{\xi,k}}{\sum_{j=1}^{k}\log\left(\frac{1}{\beta_{j}}\right)}=0.\label{eq:2-beta}
\end{equation}
\end{lem}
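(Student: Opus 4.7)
The plan is to leverage the fact that, under Assumption-1, Lemma \ref{lem:A-1-Alternative} gives $\beta_j^s m_j \leq 1$ for all $j$, so each term $\log\!\bigl(1/(m_j\beta_j^s)\bigr)$ is nonnegative, and moreover the preceding corollary tells us that the liminf in (\ref{eq:Haudorff-dim-formula}) is actually a limit. So the strategy is to bound $\#F_{\xi,k}$ from above by the total ``defect'' $\sum_{j=1}^{k}\log\!\bigl(1/(m_j\beta_j^s)\bigr)$, and then to show this defect is $o\!\bigl(\sum_{j=1}^k\log(1/\beta_j)\bigr)$.

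Concretely, let $L_k:=\sum_{j=1}^{k}\log(1/\beta_j)$ and $M_k:=\sum_{j=1}^{k}\log m_j$. Since $\beta_j<1/2$, we have $\log(1/\beta_j)>\log 2$, so $L_k\to\infty$. From (\ref{eq:Haudorff-dim-formula}), interpreted as a genuine limit by the corollary following Lemma \ref{lem:A-1-Alternative}, we obtain
\[
\frac{M_k}{L_k}\longrightarrow s,\qquad\text{i.e.,}\qquad sL_k-M_k=o(L_k).
\]
Rewriting, $\sum_{j=1}^{k}\log\!\bigl(1/(m_j\beta_j^s)\bigr)=sL_k-M_k=o(L_k)$, and by Lemma \ref{lem:A-1-Alternative} every summand is $\geq 0$.

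For $j\in F_{\xi,k}$ we have $m_j\beta_j^s<\xi$, hence $\log\!\bigl(1/(m_j\beta_j^s)\bigr)>\log(1/\xi)>0$. Summing only over $F_{\xi,k}$ and using that the omitted terms are nonnegative,
\[
\#F_{\xi,k}\cdot\log(1/\xi)\;\leq\;\sum_{j\in F_{\xi,k}}\log\!\bigl(1/(m_j\beta_j^s)\bigr)\;\leq\;\sum_{j=1}^{k}\log\!\bigl(1/(m_j\beta_j^s)\bigr)=o(L_k).
\]
Dividing by $L_k\log(1/\xi)$ yields $\#F_{\xi,k}/L_k\to 0$, which is exactly (\ref{eq:2-beta}).

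There is no real obstacle here; the only thing to be careful about is justifying that the liminf defining $s$ is a true limit under Assumption-1 (so that $sL_k-M_k=o(L_k)$ rather than the weaker one-sided estimate), but this is precisely what was recorded in the corollary immediately after Lemma \ref{lem:A-1-Alternative}. Everything else is a one-line pigeonhole applied to a telescoping nonnegative sum.
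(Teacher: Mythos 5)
Your proof is correct and is essentially the paper's argument in different packaging: your key inequality $\#F_{\xi,k}\log(1/\xi)\leq sL_k-M_k$ is exactly the paper's estimate obtained by splitting $\{1,\dots,k\}$ into $E_{\xi,k}$ and $F_{\xi,k}$ and using $m_j\beta_j^{s}\leq 1$ from Lemma \ref{lem:A-1-Alternative}. The only minor difference is that you invoke the corollary that the liminf defining $s$ is a limit (so the nonnegative defect sum is $o\bigl(\sum_{j\leq k}\log(1/\beta_j)\bigr)$), whereas the paper works directly with the liminf and a limsup manipulation using $\log\xi<0$, so it never needs the limit statement; both routes are valid and rest on the same two ingredients.
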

\begin{proof}
Let $E_{\xi,k}$ be the integers $1\leq j\leq k$ for which $\xi\leq\beta_{j}^{s}m_{j}.$
Then $E_{\xi,k}$ and $F_{\xi,k}$ are disjoint and their union is
the set $\left\{ 1,2,\ldots,k\right\} .$ It follows that 
\begin{align*}
s & =\liminf_{k\to\infty}\frac{\log\left(m_{1}\cdots m_{k}\right)}{\log\left(\frac{1}{\beta_{k}}\cdots\frac{1}{\beta_{k}}\right)}\\
 & =\liminf_{k\to\infty}\frac{\sum_{j\in E_{\xi,k}}\log\left(m_{j}\right)+\sum_{j\in F_{\xi,k}}\log\left(m_{j}\right)}{\sum_{j=1}^{k}\log\left(\frac{1}{\beta_{j}}\right)}\\
 & \leq\liminf_{k\to\infty}\frac{\sum_{j\in E_{\xi,k}}\log\left(\frac{1}{\beta_{j}^{s}}\right)+\sum_{j\in F_{\xi,k}}\log\left(\frac{\xi}{\beta_{j}^{s}}\right)}{\sum_{j=1}^{k}\log\left(\frac{1}{\beta_{j}}\right)}\\
 & =s+\log\left(\xi\right)\limsup_{k\to\infty}\frac{\#F_{\xi,k}}{\sum_{j=1}^{k}\log\left(\frac{1}{\beta_{j}}\right)}.
\end{align*}
Where we used $m_{j}\leq\frac{1}{\beta_{j}^{s}},$ for $j$ in $E_{\xi,k}$
by (\ref{eq:0-beta}) and $m_{j}\leq\frac{\xi}{\beta_{j}^{s}}$ for
$\xi$ in $F_{\xi,k}$ by the definition $F_{\xi,k}.$ Using $\log(\xi)<0$,
(\ref{eq:2-beta}) follows from this calculation. 
\end{proof}
A subset $A$ of $\mathbb{N}$ has \emph{density} $\gamma,$ if 
\[
\frac{\#\left(A\cap[1,k]\right)}{k}\to\gamma\text{ as }k\to\infty.
\]
Here $\#B$ denotes the cardinality of the finite set $B.$ We say
a sequence $\left(a_{k}\right)$ is \emph{nearly constantly equal
to} $a$, if the set $A:=\left\{ k\mid a_{k}\neq a\right\} $ has
density zero for some $a.$ 
\begin{cor}
\label{cor: Sets of density zero}If Assumption-1 holds and there
is a $\beta>0,$ such that $\beta\leq\beta_{j}$ for all $j,$ then
the set 
\[
A_{\xi}:=\left\{ k\mid\beta_{k}^{s}m_{k}\leq\xi\right\} 
\]
has density zero for any $0<\xi<1$. Furthermore, if $\beta_{j}=\beta$
for all $j,$ then 
\[
A_{1}':=\left\{ k\mid\beta^{s}m_{k}<1\right\} 
\]
has density zero. Hence, $\left(m_{k}\right)$ is nearly constantly
equal to $M:=\frac{1}{\beta^{s}}$ and consequently, 
\begin{equation}
s=\frac{\log\left(M\right)}{-\log\left(\beta\right)}.\label{eq:dim-const-beta}
\end{equation}
\end{cor}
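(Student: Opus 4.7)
The plan is to reduce all three claims to Lemma \ref{Sec-6-lem:6.3}. For the density statement about $A_{\xi}$, pick any $\xi'\in(\xi,1)$; then $A_{\xi}\cap[1,k]\subseteq F_{\xi',k}$ because $\beta_j^s m_j\leq\xi<\xi'$. The hypothesis $\beta\leq\beta_j$ yields $\sum_{j=1}^{k}\log(1/\beta_j)\leq k\log(1/\beta)$, so
\[
\frac{\#(A_{\xi}\cap[1,k])}{k}\;\leq\;\frac{\#F_{\xi',k}}{\sum_{j=1}^{k}\log(1/\beta_j)}\cdot\log(1/\beta)\;\longrightarrow\;0
\]
by Lemma \ref{Sec-6-lem:6.3} applied to $\xi'$, which gives the first assertion.

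For $A_1'$ in the case $\beta_j\equiv\beta$, I use that each $m_k$ is a positive integer with $\beta^s m_k\leq 1$ by (\ref{eq:0-beta}). If $1/\beta^s\notin\mathbb{Z}$, the strict inequality $\beta^s m_k<1$ forces $m_k\leq\lfloor 1/\beta^s\rfloor$; if $1/\beta^s\in\mathbb{Z}$, it forces $m_k\leq 1/\beta^s-1$. In either case there is a uniform $\xi_0<1$ with $A_1'\subseteq A_{\xi_0}$, so $A_1'$ inherits density zero from the first part.

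For the ``nearly constantly equal'' claim and (\ref{eq:dim-const-beta}), observe that the density-one complement $\mathbb{N}\setminus A_1'$ consists of integers $k$ with $m_k=1/\beta^s$. Since $m_k\geq 2$ by construction, this set is nonempty, forcing $M:=1/\beta^s$ to be a positive integer. Combining $m_k=M$ on a density-one set with the uniform bound $m_k\leq M$ gives $\frac{1}{k}\sum_{j=1}^{k}\log m_j\to\log M$ by a Ces\`{a}ro-type estimate, and substitution into (\ref{eq:Haudorff-dim-formula}) (where the liminf is already known to be a limit) yields
\[
s=\lim_{k\to\infty}\frac{\sum_{j=1}^{k}\log m_j}{k\log(1/\beta)}=\frac{\log M}{-\log\beta}.
\]
No step is technically demanding; the only minor subtlety is the strict/non-strict mismatch between $A_{\xi}$ and $F_{\xi,k}$, which is dispatched by the $\xi'$-perturbation in the first paragraph.
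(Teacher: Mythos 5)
Your proof is correct and follows essentially the same route as the paper: reduce the $A_{\xi}$ claim to Lemma \ref{Sec-6-lem:6.3} via $\sum_{j\le k}\log(1/\beta_j)\le k\log(1/\beta)$, embed $A_1'$ in some $A_{\xi_0}$ with $\xi_0<1$ using the integrality of $m_k$ together with (\ref{eq:0-beta}), and obtain (\ref{eq:dim-const-beta}) from a Ces\`aro estimate on $\frac{1}{k}\sum_{j\le k}\log m_j$. Your explicit $\xi'$-perturbation even handles the strict/non-strict mismatch between $A_{\xi}$ and $F_{\xi,k}$ more carefully than the paper does (the nonemptiness of $\mathbb{N}\setminus A_1'$ follows from its density one, not from $m_k\ge 2$, but that is immaterial).
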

\begin{proof}
Since $\frac{\#F_{\xi,k}}{\sum_{j=1}^{k}\log\left(\frac{1}{\beta_{j}}\right)}\geq\frac{\#F_{\xi,k}}{\sum_{j=1}^{k}\log\left(\frac{1}{\beta}\right)}$
it follows from (\ref{eq:2-beta}) that $A_{\xi}$ has density zero.
If $\beta_{j}=\beta$ for all $j,$ then there is a $\xi$ such that
$A_{1}'=A_{\xi}.$ 

Let $E$ be the set of integers $j$ for which $m_{j}=M=\frac{1}{\beta^{s}}.$
Then $E$ is the complement of $A_{1}'$ and $m_{j}<M$ for $j\in A_{1}'.$
\begin{align*}
0\leq & \frac{\sum_{j=1}^{k}\log\left(m_{j}\right)}{k\log\left(\frac{1}{\beta}\right)}-\frac{\sum_{j\in E\cap[1,k]}\log\left(m_{j}\right)}{k\log\left(\frac{1}{\beta}\right)}\\
 & \leq\frac{\sum_{j\in A_{1}'\cap[1,k]}\log\left(M\right)}{k\log\left(\frac{1}{\beta}\right)}\\
 & \to0,
\end{align*}
since $A_{1}'$ has density zero. Since $\frac{\sum_{j=1}^{k}\log\left(m_{j}\right)}{k\log\left(\frac{1}{\beta}\right)}\to s$
and 
\[
\frac{\sum_{j\in E\cap[1,k]}\log\left(m_{j}\right)}{k\log\left(\frac{1}{\beta}\right)}=\frac{k\log\left(M\right)-\#\left(A_{1}'\cap[1,k]\right)}{k\log\left(\frac{1}{\beta}\right)}\to\frac{\log\left(M\right)}{\log\left(\frac{1}{\beta}\right)}
\]
we have established (\ref{eq:dim-const-beta}). 
\end{proof}
If $\beta_{k}=\beta,$ then the $D_{k}$ must contain the extreme
points $0$ and $\frac{1-\beta}{\beta}$ for nearly all $k.$ 
\begin{cor}
\label{cor:Constant-beta}If Assumption-1 holds and $\beta_{k}=\beta$
for all $k,$ then $d_{k,0}=0$ and $d_{k,M-1}=\frac{1-\beta}{\beta}$
for nearly all $k,$ where $M$ is as in Corollary \ref{cor: Sets of density zero}. \end{cor}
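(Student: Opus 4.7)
The plan is to combine Corollary \ref{cor: Sets of density zero} with the sharpened form of Assumption-1 furnished by Lemma \ref{lem:A-1-Alternative}. Since $\beta_k=\beta$ is constant, Corollary \ref{cor: Sets of density zero} already tells us that the set $A_1' = \{k : m_k < M\}$ has density zero, where $M = 1/\beta^s$. Consequently it suffices to show that whenever $m_k = M$ we automatically have $d_{k,0} = 0$ and $d_{k,M-1} = \frac{1-\beta}{\beta}$; the set on which the conclusion fails is then contained in $A_1'$, which has density zero.

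Fix $k$ with $m_k = M$. I would apply the spacing inequality (\ref{eq:1-beta}) from Lemma \ref{lem:A-1-Alternative} at the extremes $i=0$, $j=m_k-1$, obtaining
\[
m_k^t - 1 \;\leq\; d_{k,m_k-1} - d_{k,0}.
\]
The key arithmetic observation is that $M^t = (1/\beta^s)^{1/s} = 1/\beta$, so $m_k^t - 1 = \frac{1-\beta}{\beta}$. On the other hand, the containment $D_k \subset \left[0,\frac{1-\beta}{\beta}\right]$ forces $d_{k,0}\geq 0$ and $d_{k,m_k-1}\leq \frac{1-\beta}{\beta}$, hence
\[
\frac{1-\beta}{\beta} \;\leq\; d_{k,m_k-1} - d_{k,0} \;\leq\; \frac{1-\beta}{\beta}.
\]
Equality throughout leaves no slack in either endpoint, so $d_{k,0} = 0$ and $d_{k,M-1} = \frac{1-\beta}{\beta}$, as desired.

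There is really no obstacle here: the corollary follows almost immediately once one notices the exact coincidence $M^t = 1/\beta$, which makes the lower bound from Assumption-1 match the diameter of the ambient interval $\left[0,\frac{1-\beta}{\beta}\right]$ that contains $D_k$. The density-zero exceptional set is inherited verbatim from Corollary \ref{cor: Sets of density zero}.
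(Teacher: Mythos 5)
Your proposal is correct and follows essentially the same route as the paper's proof: invoke Corollary \ref{cor: Sets of density zero} to get $m_k=M=\beta^{-s}$ off a density-zero set, then apply the restated Assumption-1 inequality (\ref{eq:1-beta}) with $i=0$, $j=M-1$ together with $M^t-1=\frac{1-\beta}{\beta}$ and the containment $D_k\subset\left[0,\frac{1-\beta}{\beta}\right]$ to force both endpoints. Nothing is missing; the argument matches the paper's.
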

\begin{proof}
By Corollary \ref{cor: Sets of density zero} $m_{k}=M=\frac{1}{\beta^{s}}$
and $\beta^{s}M=1$ for nearly all $k.$ So by (\ref{eq:1-beta})$ $
$M^{t}\leq1+d_{k,0}-d_{k,M-1}.$ Since, $M^{t}-1=\frac{1-\beta}{\beta}$
and $D_{k}$ is a subset of the interval $\left[0,\frac{1-\beta}{\beta}\right]$
the claim follows. \end{proof}
\begin{rem}
The proof of Corollary \label{Sec-6-Rem:Constant-beta-and-m} shows
that, if $\beta_{k}=\beta$ and $m_{k}=M$ for all $k,$ then $d_{k,0}=0$
and $d_{k,M-1}=\frac{1-\beta}{\beta}$ for all $k.$ 
\end{rem}
If $m_{k}=2$ and $D_{k}$ consists of the extreme points then Assumption-1
is equivalent to (\ref{eq:0-beta}).
\begin{cor}
\label{cor:Middle 1-2gamma sets}If $m_{k}=2$ and $D_{k}=\left\{ 0,\frac{1-\beta_{k}}{\beta_{k}}\right\} $
for all $k\in\mathbb{N}$, then Assumption-1 is equivalent to $\beta_{k}^{s}\le\frac{1}{2}$
for all $k$.\end{cor}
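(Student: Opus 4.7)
The plan is to reduce the statement to Lemma \ref{lem:A-1-Alternative}, which characterizes Assumption-1 as the conjunction of the two conditions (\ref{eq:0-beta}) and (\ref{eq:1-beta}). I will show that, under the special hypotheses $m_k=2$ and $D_k=\{0,\tfrac{1-\beta_k}{\beta_k}\}$, each of these two conditions collapses to the single inequality $\beta_k^s\le\tfrac12$, so that their conjunction is likewise equivalent to $\beta_k^s\le\tfrac12$ for every $k$.

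The first step is immediate: with $m_k=2$, condition (\ref{eq:0-beta}) reads $2\beta_k^s\le 1$, i.e., $\beta_k^s\le\tfrac12$. For the second step, I would observe that the range $0\le i<j<m_k=2$ leaves only the pair $(i,j)=(0,1)$, so condition (\ref{eq:1-beta}) reduces to the single inequality $2^t-1\le d_{k,1}-d_{k,0}$. Substituting $d_{k,0}=0$ and $d_{k,1}=\tfrac{1-\beta_k}{\beta_k}$ converts this into $2^t\le\tfrac{1}{\beta_k}$, and raising both sides to the power $s=1/t$ yields $\beta_k^s\le 2^{-1}=\tfrac12$. Conversely, $\beta_k^s\le\tfrac12$ gives $\beta_k\le 2^{-t}$, which rearranges back to $2^t-1\le\tfrac{1-\beta_k}{\beta_k}$, so (\ref{eq:1-beta}) holds in this setting.

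Combining these two observations via Lemma \ref{lem:A-1-Alternative} yields the claimed equivalence. There is no serious obstacle in this proof; the only conceptual point worth flagging is that the two a priori independent conditions (\ref{eq:0-beta}) and (\ref{eq:1-beta}) in Lemma \ref{lem:A-1-Alternative} happen, in this extremal two-point configuration, to collapse to the \emph{same} inequality rather than one being strictly stronger than the other. This is exactly because $D_k$ already contains both endpoints of $\left[0,\tfrac{1-\beta_k}{\beta_k}\right]$, which makes the gap condition (\ref{eq:1-beta}) as tight as possible relative to $\beta_k$.
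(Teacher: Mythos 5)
Your proposal is correct and follows essentially the same route as the paper: both reduce the claim to Lemma \ref{lem:A-1-Alternative} and observe that, for $m_k=2$ and $D_k=\left\{0,\tfrac{1-\beta_k}{\beta_k}\right\}$, the gap condition (\ref{eq:1-beta}) for the single pair $(i,j)=(0,1)$ becomes $2^{t}\le\tfrac{1}{\beta_k}$, which coincides with (\ref{eq:0-beta}), i.e.\ $\beta_k^{s}\le\tfrac12$.
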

\begin{proof}
It is sufficient to show that equations (\ref{eq:1-beta}) and (\ref{eq:0-beta})
are equivalent. Since $m_{j}=2$ for all $j$, then $\eta_{j},\varepsilon_{j}\in\left\{ 0,1\right\} $
and $\varepsilon_{j}<\eta_{j}$ iff $\eta_{j}-\varepsilon_{j}=1$.
Thus, equation (\ref{eq:1-beta}) simplifies to $2^{t}\le1+d_{j,1}-d_{j,0}=\frac{1}{\beta_{k}}$,
but this is (\ref{eq:0-beta}).
\end{proof}
The following is an example of a Cantor set $C$ satisfying Assumption-1
and with $s=1.$ 
\begin{example}
Let $\beta_{j}=\frac{1}{2}\gamma_{j}$ where $0<\gamma_{j}<1.$ Set
$D_{j}:=\left\{ 0,d_{j}\right\} ,$ where $1\leq d_{j}\leq\frac{1-\beta_{j}}{\beta_{j}},$
for all $j.$ In particular, $m_{j}=2$ for all $j.$ With this notation
\begin{align*}
\frac{\log\left(m_{1}\cdots m_{k}\right)}{\log\left(\frac{1}{\beta_{1}}\cdots\frac{1}{\beta_{k}}\right)} & =\frac{\log\left(2\right)}{\log\left(2\right)+\log\left(\gamma_{1}\cdots\gamma_{k}\right)^{1/k}}.
\end{align*}
Hence, if $\left(\gamma_{1}\cdots\gamma_{k}\right)^{1/k}\to1,$ e.g.,
if $\gamma_{k}=\frac{k}{k+1},$ then $s=1.$ Note $t=\frac{1}{s}=1.$
Since $\beta_{j}<\frac{1}{2}$ we have verified (\ref{eq:0-beta}).
Since $d_{j}-0\geq1$ the inequality (\ref{eq:1-beta}) holds. It
follows from Lemma \ref{lem:A-1-Alternative} that Assumption-1 is
satisfied. 
\end{example}

\subsection{Assumption-1 and Assumption-2}

In this section we investigate the relationship between our separation
assumptions: Assumption-2a and Assump\-tion-2b. 
\begin{lem}
\label{lem:A-2-Alternative}If (\ref{eq:0-beta}) holds, in particular,
if Assumption-1 holds, then Assumption-2a is equivalent to 
\begin{equation}
1+\left(j-i\right)^{t}\beta_{k+1}m_{k+1}^{t}\leq d_{k,j}-d_{k,i}\label{eq:A-2-alternative}
\end{equation}
for all $k$ and all $0\leq i<j<m_{k}.$ \end{lem}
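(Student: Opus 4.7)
The plan is to observe that, under hypothesis (\ref{eq:0-beta}), every single-stage factor $\beta_\ell m_\ell^t$ appearing in the product on the left of Assumption-2a is bounded above by $1$, so the assumption is strongest when this product consists of only one factor. That ``worst case'' instance is exactly (\ref{eq:A-2-alternative}), and the general case then follows.

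Concretely, I would first rewrite Assumption-2a in product form. Using $\tfrac{b(k)}{b(k')}=\beta_{k'+1}\cdots\beta_k$ and $\tfrac{\mu(k)}{\mu(k')}=m_{k'+1}\cdots m_k$, Assumption-2a is equivalent to the statement that, for every $k'\geq 1$, every $k>k'$, and all $0\le i<j<m_{k'}$,
\[
1+(j-i)^t\prod_{\ell=k'+1}^{k}\beta_\ell m_\ell^{t}\;\le\; d_{k',j}-d_{k',i}.
\]
Next, from $\beta_\ell^s m_\ell\leq 1$ I get $m_\ell\leq\beta_\ell^{-s}$, hence $m_\ell^{t}\leq\beta_\ell^{-1}$, and therefore $\beta_\ell m_\ell^{t}\le 1$ for every $\ell$. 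Consequently, the product $\prod_{\ell=k'+1}^{k}\beta_\ell m_\ell^{t}$ is a nonincreasing function of $k$, and attains its maximum at $k=k'+1$, where it equals $\beta_{k'+1}m_{k'+1}^{t}$.

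For the forward implication, apply Assumption-2a with $k=k'+1$ and rename $k'\mapsto k$ to obtain (\ref{eq:A-2-alternative}). For the converse, given (\ref{eq:A-2-alternative}) and arbitrary $k>k'$, chain the two bounds:
\[
1+(j-i)^t\prod_{\ell=k'+1}^{k}\beta_\ell m_\ell^{t}\;\le\;1+(j-i)^t\beta_{k'+1}m_{k'+1}^{t}\;\le\; d_{k',j}-d_{k',i},
\]
which is Assumption-2a. I do not foresee a genuine obstacle here; the only care needed is keeping the two role-swapped meanings of ``$k$'' (the lemma's $k$ versus Assumption-2a's $k'$) straight, and verifying the one-line algebraic passage $\beta_\ell^s m_\ell\leq 1\Rightarrow\beta_\ell m_\ell^{t}\le 1$ that is doing all the work.
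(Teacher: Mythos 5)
Your proof is correct and follows essentially the same route as the paper: specialize Assumption-2a to consecutive stages for the forward direction, and for the converse use (\ref{eq:0-beta}) to get $\beta_\ell m_\ell^{t}\le 1$ so that the product $\tfrac{b(k)}{b(k')}\bigl(\tfrac{\mu(k)}{\mu(k')}\bigr)^{t}$ is dominated by its first factor. The paper's proof is just a terser version of exactly this argument.
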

\begin{proof}
Setting $j=k'$ and $k=k'+1$ in Assumption-2a gives (\ref{eq:A-2-alternative}).
Conversely, combining (\ref{eq:0-beta}) and (\ref{eq:A-2-alternative})
leads to Assumption-2a.\end{proof}
\begin{rem}
Setting $j-i=1$ in (\ref{eq:A-2-alternative}) gives 
\begin{equation}
m_{k+1}^{t}\beta_{k+1}\leq g_{k}\left(1\right)-1.\label{eq:A-2-alpha=00003D1}
\end{equation}

(\emph{a}) If $\beta_{j}=\beta$ for all $j,$ then the set of $k$
such that $m_{k+1}^{t}\beta_{k+1}<1$ has density zero, by Corollary
\ref{cor: Sets of density zero}. Hence, using (\ref{eq:A-2-alpha=00003D1})
we see that, except for a set of $k$ with density zero, we have $2\leq d_{k,j+1}-d_{k,j}$
for all $j=0,\ldots,m_{k}-2.$ 

(\emph{b}) If $0<\beta\leq\beta_{j}$ for all $j,$ then for any $0<\xi<1,$
the set of $k$ such that $m_{k+1}^{t}\beta_{k+1}<\xi$ has density
zero, by Corollary \ref{cor: Sets of density zero}. Hence, using
(\ref{eq:A-2-alpha=00003D1}) we see that, except for a set of $k$
with density zero, we have $1+\xi\leq d_{k,j+1}-d_{k,j}$ for all
$j=0,\ldots,m_{k}-2.$ 
\end{rem}
The following results shows that if $s\leq\log(2)/\log(3),$ then
Assumption-1 implies Assumption-2a and Assumption-2b. Hence, to construct
an example where Assumption-1 holds and one or both of Assumption-2a
and Assumption-2b fails, we must consider $s>\log(2)/\log(3).$ 
\begin{cor}
\label{cor:A1 with small s}If Assumption-1 holds and $s\leq\frac{\log\left(2\right)}{\log\left(3\right)},$
then Assumption-2a and Assumption-2b both hold. \end{cor}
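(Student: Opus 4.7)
The plan is to reduce both Assumption-2a and Assumption-2b to two elementary inequalities in a single integer variable $n = j - i \geq 1$, and then to verify those inequalities using the hypothesis $t \geq \log(3)/\log(2)$ (equivalently $2^t \geq 3$).

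First I would invoke Lemma \ref{lem:A-1-Alternative} to replace Assumption-1 by its two equivalent pieces: the dimension bound $\beta_k^s m_k \leq 1$ (which on raising to the power $t$ yields $\beta_k m_k^t \leq 1$) and the sharp separation estimate
\[
d_{k,j} - d_{k,i} \geq (1+n)^t - 1.
\]
Assumption-2b demands $d_{k,j} - d_{k,i} \geq 2n$, so it suffices to prove $(1+n)^t - 1 \geq 2n$ for every integer $n \geq 1$. For Assumption-2a, Lemma \ref{lem:A-2-Alternative} reduces the claim to $d_{k,j} - d_{k,i} \geq 1 + n^t \beta_{k+1} m_{k+1}^{t}$; since $\beta_{k+1} m_{k+1}^{t} \leq 1$, it is enough to establish $(1+n)^t - 1 \geq 1 + n^t$, that is, $(1+n)^t - n^t \geq 2$ for every integer $n \geq 1$.

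Both required inequalities are tight at $n=1$ and reduce there exactly to $2^t \geq 3$, which is the hypothesis. For $n > 1$ I would pass to the real variable $x \geq 1$. Setting $\varphi(x) := (1+x)^t - x^t - 2$, one has $\varphi'(x) = t[(1+x)^{t-1} - x^{t-1}] \geq 0$ since $t > 1$, so $\varphi$ is non-decreasing on $[1,\infty)$ and thus nonnegative there. Setting $\psi(x) := (1+x)^t - 2x - 1$, one has $\psi'(x) = t(1+x)^{t-1} - 2$, which is non-decreasing in $x$ and satisfies
\[
\psi'(1) = t \cdot 2^{t-1} - 2 \;\geq\; \tfrac{\log 3}{\log 2} \cdot \tfrac{3}{2} - 2 \;>\; 0,
\]
so $\psi$ is non-decreasing and nonnegative on $[1,\infty)$ as well.

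There is no substantive obstacle in the argument; it is entirely an exercise in elementary calculus once the reduction via Lemmas \ref{lem:A-1-Alternative} and \ref{lem:A-2-Alternative} is made. The only point requiring care is the monotonicity of $\psi$, which genuinely uses the full strength $t \geq \log(3)/\log(2)$ (not merely $t \geq 1$), since the derivative $\psi'(1) = t\cdot 2^{t-1} - 2$ is precisely where the threshold enters.
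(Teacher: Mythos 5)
Your proposal is correct and follows essentially the same route as the paper: reduce via Lemma \ref{lem:A-1-Alternative} and Lemma \ref{lem:A-2-Alternative} to the elementary inequalities $(1+n)^{t}-n^{t}\geq 2$ and $2^{t}\geq 3$, the latter being exactly the hypothesis $s\leq\log(2)/\log(3)$. The only cosmetic differences are that the paper verifies Assumption-2b just for consecutive indices (which suffices, since that assumption telescopes) where you prove $(1+n)^{t}\geq 2n+1$ for all $n\geq 1$ directly, and that you supply the calculus the paper dismisses as ``easy to see.''
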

\begin{proof}
Fix $0\leq j\leq m_{k}-2.$ Setting $\varepsilon_{k}=j$ and $\eta_{k}=1+j$
in (\ref{eq:1-beta}) gives 
\[
2^{t}\leq1+d_{k,j+1}-d_{k,j}.
\]
Since $s\leq\frac{\log\left(2\right)}{\log\left(3\right)}$ is equivalent
to $3\leq2^{t},$ it follows that $2\leq d_{k,j+1}-d_{k,j}.$ It follows
that Assumption-2b holds. 

Let $\alpha=j-i.$ Using $3\leq2^{t}$ and $1\leq\alpha$ it is easy
to see that 
\[
1+\alpha^{t}\leq\left(1+\alpha\right)^{t}-1.
\]
Using (\ref{eq:1-beta}) we see that 
\[
1+\left(j-i\right)^{t}\leq\left(1+\alpha\right)^{t}-1\leq d_{k,j}-d_{k,i}.
\]
If follows from (\ref{eq:0-beta}) that (\ref{eq:A-2-alternative})
holds. So Lemma \ref{lem:A-2-Alternative} shows that Assumption-2a
holds. 
\end{proof}
In general, not all sets satisfying Assumption-1 and Assumption-2a
have dimension $s\le\frac{\log\left(2\right)}{\log\left(3\right)}$
as required by Corollary \ref{cor:A1 with small s}. Examples \ref{Ex: A1+A2 not necessarily separate}
and \ref{Sec-6-Example:A1-A2a-not-A2b} constructs a families of sets
satisfying both Assumption-1 and Assumption-2a, but not Assumption-2b.
In one example we set $m_{k}=3$ for all $k$ and vary $\beta_{k}.$
In the other example we fix $\beta_{k}=\beta$ and vary $m_{k}.$ 
\begin{example}
\label{Ex: A1+A2 not necessarily separate}In this example we use
some of the notation introduced early in Section \ref{sec:Proof-of-Lemma-Basic-vs-Simple}.
Choose $0<\beta_{k}\le\frac{1}{5}$ such that $A:=\left\{ k\mid\beta_{k}<\frac{1}{5}\right\} $
is a set of density zero and suppose 
\[
D_{k}:=\left\{ 0,1+\xi_{k},\frac{1-\beta_{k}}{\beta_{k}}\right\} 
\]
Thus, $m_{k}=3$ for all $k$ so that $\alpha_{k}=1,2$ and $s=\frac{\log\left(3\right)}{\log\left(5\right)}>\frac{\log\left(2\right)}{\log\left(3\right)}$.
Since $g_{k}\left(1\right)$ represents a minimum difference, we will
assume $\left(1+\xi_{k}\right)-0\le\frac{1-\beta_{k}}{\beta_{k}}-\left(1+\xi_{k}\right)$
for all $k$ without loss of generality. We will give constraints
on $\xi_{k}$ such that $D_{k}$ satisfies both Assumption-1 and Assumption-2a. 

Equation (\ref{eq:0-beta}) is satisfied whenever $\beta_{k}\le\frac{1}{5}$.
Since $g_{k}\left(2\right)=\frac{1-\beta_{k}}{\beta_{k}}\ge4$ for
any value of $\beta_{k}$, then $D_{k}$ satisfies Assumption-1 whenever
$g_{k}\left(1\right)=1+\xi_{k}\ge2^{t}-1$.

Let $\left(m_{k}\right)^{t}\beta_{k}=\gamma_{k}\le1$ so that for
any $k>k'$ Assumption-2a can be written 
\[
g_{k'}\left(\alpha_{k'}\right)-1\ge\left(\alpha_{k'}\right)^{t}\gamma_{k'+1}\ge\left(\alpha_{k'}\right)^{t}\gamma_{k'+1}\cdots\gamma_{k}
\]
According to Lemma \ref{lem:A-2-Alternative}, it is sufficient to
consider only $k=k'+1$. Since $g_{k-1}\left(2\right)-1\ge3>2^{t}\ge2^{t}\gamma_{k}$,
then $D_{k-1}$ satisfies Assumption-2a whenever
\[
\xi_{k-1}=g_{k-1}\left(1\right)-1\ge\left(1\right)^{t}\gamma_{k}=\gamma_{k}.
\]
Therefore, when $k\in A$ we may choose $1>\xi_{k-1}\ge2^{t}-2$ so
that $D_{k-1}$ satisfies both Assumption-1 and Assumption-2a, but
not Assumption-2b.

\begin{example}
\label{Sec-6-Example:A1-A2a-not-A2b}If $\beta_{k}=\beta$ for all
$k,$ $m_{k}\leq3$ for all $k,$ and $m_{k}=3$ for nearly all $k,$
then $s=\log(3)/\log(1/\beta).$ In particular, $\beta=3^{-t}$ and
(\ref{eq:0-beta}) holds. Since $m_{k}=3$ is possible and $d_{k,j+1}-d_{k,j}>1$
we must have $\frac{1-\beta}{\beta}>2,$ and consequently, $\frac{1}{\beta}>3.$ 

Both (\ref{eq:1-beta}) and (\ref{eq:A-2-alternative}) holds if
and only if $\frac{\log(5)}{\log(3)}\leq t$ and we chose $d_{k,j}$
such that, if $m_{k}=3,$ then $d_{k,0}=0,$ $d_{k,2}=3^{t}-1,$ and
\begin{align*}
2 & \leq d_{k,1}\leq3^{t}-3,\text{ when }m_{k+1}=3\\
2^{t}-1 & \leq d_{k,1}\leq3^{t}-2^{t},\text{ when }m_{k+1}=2
\end{align*}
 and if $m_{k}=2,$ then $0\leq d_{k,0}<d_{k,1}\leq3^{t}-1$ and 
\begin{align*}
2 & \leq d_{k,1}-d_{k,0},\text{ when }m_{k+1}=3\\
2^{t}-1 & \leq d_{k,1}-d_{k,0},\text{ when }m_{k+1}=2.
\end{align*}
If $\frac{\log(5)}{\log(3)}\leq t<\frac{\log(3)}{\log(2)}.$ Then
$2^{t}-1<2,$ hence, if $m_{k+1}=2,$ we can chose $d_{k,j}$ such
that (\ref{eq:1-beta}) and (\ref{eq:A-2-alternative}) and $d_{k,1}-d_{k,0}<2,$
when $m_{k}=2,3$. Alternatively, when $m_{k}=3,$ we can arrange
$d_{k,2}-d_{k,1}<2.$ By Lemma \ref{lem:A-1-Alternative} and Lemma
\ref{lem:A-2-Alternative} this gives examples where Assumption-1
and Assumption-2a holds and Assumption-2b fails. 
\end{example}
\end{example}
\begin{rem}
\label{Sec-6-Rem:QRS} The analysis in Example \ref{Sec-6-Example:A1-A2a-not-A2b}
also leads to examples where the assumptions in Theorem \ref{Sec-2-thm:-QRS}
fails and the assumptions in Theorem \ref{thm:Main} (i.e., $t\geq\log(3)/\log(2)$)
or more generally Theorem \ref{Sec-7-thm:Main-Complete} ($\log(5)/\log(3)\leq t<\log(3)/\log(2)$)
holds. 

Conversely, if we change Example \ref{Sec-6-Example:A1-A2a-not-A2b}
so that $m_{k}=2$ for nearly all $k,$ then (\ref{eq:0-beta}) fails
for all $k$ with $m_{k}=3.$ Consequently, setting $d_{k,0}=0,$
$d_{k,m_{k}-1}=2^{t}-1$ and when $m_{k}=3$ setting $d_{k,1}=\frac{2^{t}-1}{2}$
gives examples where the assumptions of Theorem \ref{Sec-2-thm:-QRS}
holds and the assumptions of Theorem \ref{thm:Main} fail.

\end{rem}
The following example demonstrates that $m_{k}$ can be chosen arbitrarily
large.
\begin{example}
Fix an integer $q\ge2$ and $0<\alpha\le\frac{1}{2}$. For all $k$,
let $m_{k}=q^{k}$ and $\beta_{k}=q^{-k/\alpha}<q^{-2k}$. By choosing
digit sets $D_{k}\subseteq\left[0,\frac{1-\beta_{k}}{\beta_{k}}\right]$
such that $d_{k,0}=0$ and $d_{k,m_{k}-1}=\frac{1-\beta_{k}}{\beta_{k}}$
of $m_{k}$ evenly spaced digits, the set $C$ has dimension $\alpha$
according to equation (\ref{eq:Haudorff-dim-formula}). Note that
$\left(m_{k}\right)^{1/\alpha}\beta_{k}=1$ for all $k$ and 
\[
\mu\left(k\right)\left(b\left(k\right)\right)^{\alpha}=q^{k\left(k+1\right)/2}\left(q^{-k\left(k+1\right)/2\alpha}\right)^{\alpha}=1.
\]

\end{example}

\section{Proof of Theorem \ref{thm:Main}}

It follows from Lemma \ref{lem:A-1-Alternative} and Lemma \ref{lem:A-2-Alternative}
that Theorem \ref{thm:Main} is a special case of the following theorem. 
\begin{thm}
\label{Sec-7-thm:Main-Complete}Let $0<\beta_{k}<1/2$ and
\[
D_{k}=\left\{ d_{k,j}\mid j=0,\ldots,m_{k}-1\right\} \subset\left[0,\frac{1-\beta_{k}}{\beta_{k}}\right],
\]
where $1<d_{k,j+1}-d_{k,j}$ be given. Let $s$ be determined by (\ref{eq:Haudorff-dim-formula})
and let $C$ be as in (\ref{eq:C-definition}). If Assumptions 1 and
2a hold or Assumptions 1 and 2b hold, then $s$--dimensional Hausdorff
measure of $C$ equals
\[
L:=\liminf_{k\to\infty}\mu(k)b(k)^{s},
\]
where $\mu(k)=m_{1}\cdots m_{k}$ and $b(k)=\beta_{1}\cdots\beta_{k}$
are as in (\ref{eq:mu-Def}) and (\ref{Sec-2-eq:b-Def}). \end{thm}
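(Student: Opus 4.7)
The plan is to derive Theorem \ref{Sec-7-thm:Main-Complete} as a direct consequence of the Measure Theorem (Theorem \ref{thm:Measure-Theorem}), which under either Assumptions 1 and 2a or Assumptions 1 and 2b asserts that
\[
H^{s}(C)=\lim_{\delta\to 0}\inf\Bigl\{\sum_{i}|I_{i}|^{s}:C\subseteq\bigcup I_{i},\ I_{i}\text{ basic intervals all of the same order},\ |I_{i}|<\delta\Bigr\}.
\]
The strategy is to evaluate the inner infimum order-by-order, then push $\delta\to 0$.

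First I would observe that all basic intervals of order $k$ have the same length $b(k)$, so the constraint $|I_{i}|<\delta$ reduces to the single condition $b(k)<\delta$, independent of the particular cover. Next I would show that the cheapest cover of $C$ by basic intervals of order $k$ must use all $\mu(k)$ of them. Indeed, distinct basic intervals of order $k$ are pairwise disjoint (the gap is strictly positive by Remark \ref{Rem:distance-between-intervals}), and each basic interval of order $k$ contains nested basic subintervals of every higher order, hence meets $C$ by compactness. Therefore no basic interval of order $k$ can be omitted, and
\[
\inf\Bigl\{\sum_{i}|I_{i}|^{s}:C\subseteq\bigcup I_{i},\ I_{i}\text{ basic intervals of order }k\Bigr\}=\mu(k)b(k)^{s}.
\]

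Finally, since $b(k)\to 0$ monotonically, the set $\{k:b(k)<\delta\}$ is a tail $\{k\geq K(\delta)\}$ with $K(\delta)\to\infty$ as $\delta\to 0$, and the Measure Theorem yields
\[
H^{s}(C)=\lim_{\delta\to 0}\inf_{k:\,b(k)<\delta}\mu(k)b(k)^{s}=\lim_{K\to\infty}\inf_{k\geq K}\mu(k)b(k)^{s}=\liminf_{k\to\infty}\mu(k)b(k)^{s}=L,
\]
which is the claim. There is no genuine obstacle at this final stage; all of the difficulty has been absorbed into Lemma \ref{lem:Basic-vs-Simple-Intervals}, which underlies the Measure Theorem. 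The only item requiring care here is the elementary verification that basic intervals of a given order are pairwise disjoint and each meets $C$, ensuring the minimal cover uses exactly $\mu(k)$ pieces.
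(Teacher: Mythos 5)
Your proof is correct and follows essentially the same route as the paper: both reduce to covers by basic intervals of a single order via Lemma \ref{lem:Basic-vs-Simple-Intervals} and the simple-interval covering lemma, observe that the only admissible cover at order $k$ is the full collection of $\mu(k)$ pairwise disjoint basic intervals (each of which meets $C$), and then let $\delta\to 0$. The only cosmetic difference is that the paper additionally notes that $\mu(k)b(k)^{s}$ is decreasing (a basic interval of order $k$ is a simple interval of order $k+1$), whereas you obtain $L$ directly from the definition of the limit inferior.
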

\begin{proof}
Consider a cover $P_{i}$ of $C$ by simple intervals at some stage
$k.$ By Lemma \ref{lem:Basic-vs-Simple-Intervals} 
\[
\sum_{i}\left|P_{i}\right|^{s}\geq\sum_{i}\sum_{j}\left|I_{i,j}\right|^{s}
\]
where $\left(I_{i,j}\right)_{j}$ are the basic intervals contained
in $P_{i}.$ Each $I_{i,j}$ is one of the basic intervals at stage
$k.$ Since the intervals $I_{i,j}$ cover $C,$ the collection $\left(I_{i,j}\right)_{i,j}$
must contain all the basic intervals at stage $k.$ Hence, it follows
from Lemma \ref{lem:Delta-Simple}, that 
\[
H_{\delta}^{s}(C)\geq\sum\left|I\right|^{s},
\]
where the sum is over all basic intervals at stage $k.$ A basic interval
interval at stage $k,$ is a simple interval at stage $k+1,$ hence
Lemma \ref{lem:Basic-vs-Simple-Intervals} tells us that 
\[
\sum\left|I^{(k)}\right|^{s}\geq\sum\left|I^{(k+1)}\right|^{s}
\]
where the first sum is over all basic intervals $I^{(k)}$ at stage
$k$ and the second sum is over all basic intervals $I^{(k+1)}$ at
stage $k+1.$ Consequently, 
\[
H_{\delta}^{s}(C)=\inf_{k\geq\delta^{-1}}\sum\left|I^{(k)}\right|^{s}=\lim_{k\to\infty}\sum\left|I^{(k)}\right|^{s}.
\]
This completes the proof.
\end{proof}

\end{document}